\DeclareMathAlphabet{\mathpzc}{OT1}{pzc}{m}{it} 
\newtheorem{theorem}{Theorem}[section]
\newtheorem{thm}[theorem]{Theorem}
\newtheorem{lemma}[theorem]{Lemma}
\newtheorem{lem}[theorem]{Lemma}
\newtheorem{proposition}[theorem]{Proposition}
\newtheorem{prop}[theorem]{Proposition}
\newtheorem{corollary}[theorem]{Corollary}
\theoremstyle{definition}
\theoremstyle{remark}
\newtheorem{remark}[theorem]{Remark}
\numberwithin{equation}{section}
\newcommand{\set}[1]{\left\{#1\right\}}
\newcommand{\im}{\mathrm{i}}
\newcommand{\mi}{\mathrm{i}}
\newcommand{\abs}[1]{\left\vert#1\right\vert}
\newcommand{\C}{\mathbb{C}}
\newcommand{\R}{\mathbb{R}}
\newcommand{\N}{\mathbb{N}}
\newcommand{\norm}[1]{\left\Vert#1\right\Vert}
\newcommand{\FH}{\mathfrak{H}}
\newcommand{\RE}{\mathbb{Re}}
\newcommand{\IM}{\mathbb{Im}}
\newcommand{\E}{\mathbb{E}}
\newcommand{\innp}[1]{\langle {#1}\rangle}
\date{}
\begin{document}
	
	\title[Berry-Ess\'een bound for complex Wiener-It\^{o} integral]{Berry-Ess\'een bound for complex Wiener-It\^{o} integral}
	%%% \subtitle{Sub-Title of Your Article}%%% optional
	\author{Huiping \textsc{Chen}}% Author Name (\sc should NOT be used here)
	\address{LMAM, School of Mathematical Sciences, Peking University, Beijing 100871, China\\Academy of Mathematics and Systems Science, Chinese Academy of Sciences, Beijing 100190, China}
	\email{chenhp@pku.edu.cn}
	
	\author{Yong \textsc{Chen}}% Author Name (\sc should NOT be used here)
	\address{Corresponding author\\Center of Applied Mathematics, School of Mathematics and Statistics,
		Jiangxi Normal University, Nanchang, Jiangxi 330022, China}
	\email{zhishi@pku.org.cn}
	
	\author{Yong \textsc{Liu}}% Author Name (\sc should NOT be used here)
	\address{LMAM, School of Mathematical Sciences, Peking University, Beijing 100871, China}
	\email{liuyong@math.pku.edu.cn}
	
	\subjclass[2020]{60F05; 60G15; 60H05}% Subject code(s)
	
	\keywords{Berry-Ess\'een bound, complex Wiener-It\^{o} integral, Fourth Moment Theorem. \\\indent 	
	We thank Prof. Xiaohong Lan for valuable comments and discussion. Y. Chen is supported by NSFC (No. 11961033). Y. Liu is supported by NSFC (No. 11731009, No. 12231002) and Center for Statistical Science, PKU.\\\indent 
	This paper has been greatly modified based on \cite{ccl2023}}% Key word(s)

	\begin{abstract}
		For complex multiple Wiener-It\^{o} integral, we present Berry-Ess\'een upper and lower bounds in terms of moments and kernel contractions under the Wasserstein distance. As a corollary, we simplify the previously known contraction condition of the complex Fourth Moment Theorem. Additionally, as an application, we explore the optimal Berry-Ess\'een bound for a statistic associated with the complex-valued Ornstein-Uhlenbeck process.
	\end{abstract}
	
	\maketitle	
	\section{Introduction}
	
	In 1952, It\^o published the groundbreaking article \cite{ito}, establishing the theory of complex multiple Wiener-It\^o integral with respect to a complex normal random measure. Since then, there has been renewed interest in theoretical research on complex multiple Wiener-It\^o integral. For instance, Hida focused on the theory of complex multiple Wiener-It\^o integral within the framework of complex white noise and from an analytical perspective in \cite[Chapter 6]{HD80}. In \cite{Camp15,chw17,cl17}, the authors proved the complex Fourth Moment Theorem, which states that the convergence in distribution of a sequence of complex multiple Wiener-It\^o integrals to a complex Gaussian variable is equivalent to the convergences of its absolute moments up to the fourth. Additionally, the product formula and \"Ust\"unel-Zakai independence criterion for complex multiple Wiener-It\^o integral were obtained in \cite{ch17}. In \cite{ChenLiu2014,cl19}, the authors explored various properties of complex multiple Wiener-It\^o integral, complex Ornstein-Uhlenbeck operator and semigroup. 
	
	The study of complex Gaussian field is highly motivated by various applications. For instance, it is crucial to understand the asymptotic behavior of some functionals of complex  Wiener-It\^o integral in probabilistic model of cosmic microwave background radiation, see \cite{Kamionkowski_1997,marinucci_peccati_2011}. The theory of complex multiple Wiener-It\^o integral can be used to analyze the stochastic complex Ginzburg-Landau equation, which is one of the most significant nonlinear partial differential equations in applied mathematics and physics, see \cite{RevModPhys.74.99,Hoshino2017}. The Ornstein-Uhlenbeck process, introduced in \cite{arato1982linear,arato1962evaluation} to model the Chandler wobble or variation of latitude concerning the rotation of the Earth, has also found widespread use in finance and econophysics. Techniques involving complex multiple Wiener-It\^o integral can be used to obtain statistical inference for some parameter estimators of the Ornstein-Uhlenbeck process, such as consistency and asymptotic normality, see \cite{chw17,sty22} for example. In the field of communication and signal processing, noise is frequently assumed to be complex Gaussian noise, see \cite{Aghaei2008,BARONE2005,Matalkah2008,Reisenfeld2003} for more details.

	In this paper, we investigate the Berry-Ess\'een bound for one-diemensional and multi-dimensional complex multiple Wiener-It\^o integral under the Wasserstein distance. In the one-dimensional case, we derive the Berry-Ess\'een lower and upper bounds in terms of moments in Theorem \ref{cFMBE bound}. Firstly, this upper bound appreciably improves upon that in \cite[Theorem 4.6]{Camp15}, see Remark \ref{compare to Campese 1} and Remark \ref{compare to Campese 2} for specific explanations. Secondly, we express the lower and upper bounds in terms of contractions of kernels, which are more convenient to calculate in practical models. It is worth pointing out that we remove some redundant contractions in the upper bound. As a by-product, we simplify the complex Fourth Moment Theorem presented in \cite[Theorem 1.1]{chw17} and \cite[Theorem 3.14]{cl19}. Thirdly, we aim to derive the optimal Berry-Ess\'een bound under the Wasserstein distance, continuing the work in \cite[Theorem 4.1]{c23}, where the first author of the present paper obtained the optimal bounds for multi-dimensional and complex Wiener-It\^o integrals under some smooth distance. This is undoubtedly very difficult in both the complex and real multi-dimensional cases. At present, we achieve this goal and acquire the optimal Berry-Ess\'een bound in some special cases, see Remark \ref{optimal bound} for details and Theorem \ref{example_BM} for an example. 
	
	In the multi-dimensional case, the Berry-Ess\'{e}en upper bound for a complex multiple Wiener-It\^o integral vector is presented in Theorem \ref{duowei b-e bound thm}. One of the complexities of the multi-dimensional case is that the Berry-Ess\'{e}en upper bound is related to a partial order relation among the indices of its components. This Berry-Ess\'{e}en upper bound may improve the corresponding estimate in the real case (see \cite[Theorem 6.2.2]{NourPecc12}), see Remark \ref{remark3.12} for further explanation. As a corollary, in Theorem \ref{thm Camp}, we get the multi-dimensional Complex Fourth Moment Theorem, which shows that for a sequence of complex multiple Wiener-It\^o integral vectors, componentwise convergence to a complex Gaussian variable is equivalent to joint convergence. In Theorem \ref{thm Pecca TUd}, we establish some sufficient conditions for the asymptotic normality of a sequence of complex square-integrable functionals of a complex isonormal Gaussian process.

	The paper is organized as follows. Section \ref{section 2} introduces some elements of complex multiple Wiener-It\^o integrals. In Section \ref{section 3}, we prove the Berry-Ess\'een upper and lower bounds for complex multiple Wiener-It\^o integral. As an application, in section \ref{Application}, we derive the optimal Berry-Ess\'een bound for a statistic associated with the complex Ornstein-Uhlenbeck process. The Berry-Ess\'een bound for complex multiple Wiener-It\^o integral vector is discussed in Section \ref{section 5}.
	
	\section{Preliminaries}\label{section 2}

	We introduce the mathematical framework of complex isonormal Gaussian process and complex multiple Wiener-It\^o integral. See \cite{cl17, HD80, ito, janson} for more details.
	
	Suppose that $\FH$ is a complex separable Hilbert space with the inner product denoted by $\left\langle \cdot,\cdot\right\rangle_{\mathfrak{H}}$. Recall that a complex isonormal Gaussian process $Z=\set{Z(h):\,h\in \FH}$ defined on a complete probability space $(\Omega, \mathcal{F}, P)$ is a centered symmetric complex Gaussian family in $L^2(\Omega)$ such that
	\begin{align*}
		\mathbb{E}[ Z({h})^2]=0,\quad \mathbb{E}[Z({g})\overline{Z({h})}]=\innp{{g}, {h}}_{\FH},\quad \forall {g},{h}\in \FH.
	\end{align*}  	
	For integers $p,q\geq 0$, let $H_{p,q}(z)$ be the complex Hermite polynomial, also known as the Hermite-Laguerre-It\^o polynomial, given by 
	\begin{equation*}
		\exp\left\{ \lambda\overline{z}+\overline{\lambda}z-2|\lambda|^2\right\} =\sum_{p=0}^{\infty}\sum_{q=0}^{\infty}\frac{\overline{\lambda}^p\lambda^q}{p!q!}H_{p,q}(z),\quad \lambda\in\mathbb{C}.
	\end{equation*}
	For example, $H_{p,0}(z)=z^{p}$ for $p\geq 0$, $H_{0,q}(z)=\overline{z}^{q}$ for $q\geq 0$, $H_{1,1}(z)=\left| z\right|^2-2 $, $H_{1,2}(z)=\overline{z}\left(|z|^2-4\right)$, and $ H_{2,2}(z)=|z|^4-8|z|^2+8$.
Let $\mathscr{H}_{p,q}(Z)$ for $p,q\geq0$ be the closed linear subspace of $L^2(\Omega)$ generated by the random variables 
$$\set{H_{p,q}(Z( {h})): {h}\in \mathfrak{H},\norm{ {h}}_{\mathfrak{H} }=\sqrt2}.$$
	The space $\mathscr{H}_{p,q}(Z)$ is called the $(p, q)$-th Wiener-It\^o chaos of $Z$.

	Let $\Lambda$ be the set of all sequences $\textbf{a}=\left\lbrace a_k \right\rbrace_{k=1}^{\infty}$ of non-negative integers with only finitely many nonzero components and set $|\textbf{a}|=\sum_{k=1}^{\infty}a_k$, $\textbf{a}!=\prod_{k=1}^{\infty}a_k!$. For $p\geq 1$, let $\mathfrak{H}^{\otimes p}$ and $\mathfrak{H}^{\odot p}$ denote the $p$-th tensor product and symmetric tensor product of $\mathfrak{H}$ respectively. Let $ h\tilde{\otimes}g $ denote the symmetrization of $ h\otimes g $ for $h\in \mathfrak{H}^{\otimes p}$ and $g\in  \mathfrak{H}^{\otimes p}$, where $p,q\geq1$. 
	
	Take a complete orthonormal system $\set{e_k}_{k\ge 1}$ in $\mathfrak{H}$. Given two sequences $\mathbf{p}=\set{p_k}_{k=1}^{\infty}, \mathbf{q}=\set{q_k}_{k=1}^{\infty}\in \Lambda$ satisfying $|\mathbf{p}|=p$ and $|\mathbf{q}|=q$, It\^o in \cite[Theorem 13.2]{ito} proved that the linear mapping
	\begin{equation*}
	I_{p,q}\left(\left( \tilde{\otimes}_{k=1}^{\infty}{e}_k^{\otimes p_k}\right) \otimes \left( \tilde{\otimes}_{k=1}^{\infty}\overline{e_k}^{\otimes q_k}\right) \right):= \prod_{k=1}^{\infty}\frac{1}{\sqrt{2^{p_k+q_k}}}H_{p_k,q_k}\left(\sqrt2Z\left({e}_k\right)\right),
\end{equation*}
	provides an isometry from the tensor product $\mathfrak{H}^{\odot p}\otimes\mathfrak{H}^{\odot q}$, equipped with the norm $\sqrt{p!q!}\|\cdot\|_{\mathfrak{H}^{\otimes (p+q)}}$, onto the $(p,q)$-th Wiener-It\^o chaos $\mathscr{H}_{p,q}(Z)$. For any $f\in\mathfrak{H}^{\odot p}\otimes\mathfrak{H}^{\odot q}$, ${I}_{p,q}(f)$ is called the complex $(p,q)$-th Wiener-It\^o integral of $f$ with respect to $Z$. For any $f\in \mathfrak{H}^{\otimes (p+q)}$, we define $$I_{p,q}(f)=I_{p,q}(\tilde{f}),$$ where $\tilde{f}$ is the symmetrization of $f$ in the sense of \cite[Equation 5.1]{ito}. 
	
 Let $\sigma(Z)$ denote the $\sigma$-field generated by $Z$. The complex Wiener-It\^o chaos decomposition of $ L^2(\Omega,\sigma(Z),P)$ implies that $ L^2(\Omega,\sigma(Z),P)$ can be decomposed into an infinite orthogonal sum of the spaces $\mathscr{H}_{p,q}(Z)$. That is, any random variable $F \in L^2(\Omega,\sigma(Z),P)$ admits a unique expansion of the form
	\begin{equation*}
		F=\sum_{p=0}^{\infty}\sum_{q=0}^{\infty} I_{p,q}\left(f_{p,q}\right),	
	\end{equation*}
	where $f_{0,0}=\mathbb{E}[F]$, and $f_{p,q} \in \mathfrak{H}^{\odot p}\otimes\mathfrak{H}^{\odot q}$ for $p+q \geq1$ are uniquely determined by $F$.
	
	 We define $a\wedge b $ as the minimum of $a, b \in \mathbb{R}$. Given $f\in\mathfrak{H}^{\odot a}\otimes\mathfrak{H}^{\odot b}$ and $g\in\mathfrak{H}^{\odot c}\otimes\mathfrak{H}^{\odot d}$, for $i=0,\dots,a\land d$, $j=0,\dots,b\land c$, the $(i,j)$-th contraction of $f$ and $g$ is an element of $\mathfrak{H}^{\otimes (a+c-i-j)}\otimes\mathfrak{H}^{\otimes (b+d-i-j)}$ defined as
	\begin{align*}
		f \otimes_{i, j} g
		&= \sum_{l_{1}, \ldots, l_{i+j}=1}^{\infty}\left\langle f, e_{l_{1}} \otimes \cdots \otimes e_{l_{i}} \otimes \overline{e_{l_{i+1}}} \otimes \cdots \otimes \overline{e_{l_{i+j}}}\right\rangle\otimes\left\langle g, e_{l_{i+1}} \otimes \cdots \otimes e_{l_{i+j}} \otimes \overline{e_{l_{1}}} \otimes \cdots \otimes \overline{e_{l_{i}}}\right\rangle,
	\end{align*}
	and by convention, $f \otimes_{0,0} g=f \otimes g$ denotes the tensor product of $f$ and $g$. The product formula for complex multiple Wiener-It\^o integral, as presented in \cite[Theorem 2.1]{ch17} and  \cite[Theorem A.1]{Hoshino2017}, states that for $f \in \mathfrak{H}^{\odot a} \otimes\mathfrak{H}^{\odot b}$ and $ g \in \mathfrak{H}^{\odot c} \otimes \mathfrak{H}^{\odot d}$, where $a, b, c, d \geq0$,
	\begin{equation}\label{Product_formula}
		I_{a, b}(f) I_{c, d}(g)=\sum_{i=0}^{a \wedge d} \sum_{j=0}^{b \wedge c}\binom{a}{i} \binom{d}{i}\binom{b}{j}\binom{c}{j} i! j! I_{a+c-i-j, b+d-i-j}\left(f \otimes_{i, j} g\right).
	\end{equation}
	It\^o in \cite[Theorem 7]{ito} showed that complex Wiener -It\^o integrals satisfy the isometry property. That is, for $f\in\mathfrak{H}^{\odot a}\otimes\mathfrak{H}^{\odot b}$ and $g\in\mathfrak{H}^{\odot c}\otimes\mathfrak{H}^{\odot d}$,
	\begin{equation}\label{isometry property}
		\E\left[ I_{a,b}(f)\overline{{I}_{c,d}(g)}\right]=\mathbb{1}_{\left\lbrace a=c,b=d \right\rbrace }  a!b!\left\langle f,g\right\rangle _{\FH^{\otimes (a+b)}},
	\end{equation}	
where $\mathbb{1}_C$ denotes the indicator function of a set $C$. For $ f\in   \FH^{\odot p}\otimes \FH^{\odot q} $, according to 
\cite[Equation 2.15]{cl19}, there exists $h\in \FH^{\odot q}\otimes \FH^{\odot p}$ such that 
\begin{equation*}
	\overline{{I}_{p,q}(f)}={I}_{q,p}(h).
\end{equation*}	
We call $h\in \FH^{\odot q}\otimes \FH^{\odot p}$ the reverse complex conjugate of $  f\in   \FH^{\odot p}\otimes \FH^{\odot q} $. For instance, if $\FH = L_{\mathbb{C}}^2(T, \mathcal{B}, \mu)$ consisting of all complex-valued square-integrable functions, where $(T, \mathcal{B})$ is a measurable space and $\mu$ is a $\sigma$-finite measure without atoms, then for $  f\in   \FH^{\odot p}\otimes \FH^{\odot q} $,
	$$  h(t_1,\dots,t_q,s_1,\dots,s_p)=\bar{f}(s_1,\dots,s_p,t_1,\dots,t_q).$$ 
For $f\in \FH^{\odot p}\otimes \FH^{\odot q}$, by \cite[Theorem 3.3]{cl17},
	there exist $u,v\in (\FH_{\R}\oplus \FH_{\R})^{\odot (p+q)}$ such that
	\begin{equation}\label{pp2 Connection}
	{I}_{p,q}(f)=\mathcal{I}_{p+q}(u)+\mi\, \mathcal{I}_{p+q}(v),
	\end{equation}
where $\FH_{\R}$ is the real Hilbert space such that $\FH=\FH_{\R}+\mi \FH_{\R}$ and $\mathcal{I}_{p+q}(\cdot)$ denotes the real  $(p+q)$-th Wiener-It\^{o} integral with respect to some real isonormal Gaussian process over $\FH_{\R}\oplus \FH_{\R}$. Explicit expressions of the kernels $u,v\in (\FH_{\R}\oplus \FH_{\R})^{\odot (p+q)}$ are provided in \cite{ccl22}. 

Fix an integer $d\geq1$. Let $Z=(Z_1,Z_2,\dots, Z_d)'$ be a $d$-dimensional complex-valued random vector, where $Z_j=A_j+\mathrm{i} B_j$ for $1\leq j\leq d$. Throughout the paper, the distribution of $Z$ means the joint distribution of the $2d$-dimensional real-valued random vector $\tilde{Z}=\left(A_{1},\ldots,A_{d}, B_{1},\ldots,B_{d}\right)$. Given two $d$-dimensional complex-valued random vector random vectors $F$ and $G$, the Wasserstein distance between $F$ and $G$ is defined as
\begin{equation*}
	d_{W}(F,G) =\sup_{g\in \mathrm{Lip}(1)} \abs{\E\left[ g(\tilde{F})\right] - \E\left[ g(\tilde{G})\right] }, 
\end{equation*} 
where $\mathrm{Lip}(1)$ is the class of all real-valued functions $g:\mathbb{R}^{2d}\rightarrow \mathbb{R}$ satisfying 
$$\norm{g}_{\mathrm{Lip}}=\sup_{x\neq y, x,y\in\R^{2d}}\frac{|g(x)-g(y)|}{\norm{x-y}_{\R^{2d}}}\leq 1.$$
Here, $\|\cdot\|_{\mathbb{R}^{2d}}$ denotes the Euclidean norm on $\mathbb{R}^{2d}$. For an integer $m\geq1$, we denote by $ \mathcal{N}_m(0,\Gamma)$ an $m$-dimensional real-valued Gaussian vector with a non-negative definite covariance matrix $\Gamma$. Let $\Sigma=(\Sigma(i,j))_{1\leq i,j\leq d}$ be a complex-valued non-negative definite matrix and $\Sigma_{\RE}=(\Sigma_{\RE}(i,j))_{1\leq i,j\leq d}$, $\Sigma_{\IM}=(\Sigma_{\IM}(i,j))_{1\leq i,j\leq d}$ be its real and imaginary parts respectively. We say that $Z=(Z_1,Z_2,\dots, Z_d)'$ satisfying $\E[Z {Z}']=0$ and $\E[Z\overline{Z'}]=\Sigma$, is Gaussian and denote by $Z\sim \mathcal{CN}_d(0,\Sigma)$ if and only if  $	\tilde{Z}\sim \mathcal{N}_{2d}(0,\Sigma')$, where $\Sigma'=\begin{pmatrix}
  	\frac{1}{2}\Sigma_{\RE} & -\frac{1}{2}\Sigma_{\IM} \\
  	\frac{1}{2}\Sigma_{\IM} & \frac{1}{2}\Sigma_{\RE}
  \end{pmatrix}$. We omit the subscript if $d=1$. 
 
 We denote by $c$, $c_1$ and $c_2$ finite positive constants that can vary from line to line. We write $c(a,b)$ to indicate that the constant $c$ depends on some $a,b\in\mathbb{R}$. Let $\overset{d}{\rightarrow}$ denote convergence in distribution and $\overset{a.s.}{\rightarrow}$ denote convergence almost surely. For $z\in\mathbb{C}$, we denote by $\RE z$ and $\IM z$ 
the real and imaginary parts of $z$ respectively.

	\section{Berry-Ess\'{e}en bound for complex Wiener-It\^o integral}\label{section 3}
		\subsection{Main results}\label{section 3.1}

	\begin{theorem}\label{cFMBE bound}% [Fourth Moment Berry-Ess\'{e}en bound]
	Let $p,q\in\mathbb{N}$ be such that $l:=p+q\geq2$ and $F=I_{p,q}(f)$ with $f\in \mathfrak{H}^{\odot p}\otimes \mathfrak{H}^{\odot q} $. Suppose that $\E[\abs{F}^2]= \sigma^2$, $\E[F^2]= a+\mi b $ and $\sigma^2>\sqrt{a^2+b^2}$. Let $N\sim \mathcal{N}_2(0,C)$, where $C= \frac{1}{2}\begin{bmatrix}\sigma^2+a & b\\ b & \sigma^2-a  \end{bmatrix}$. Then
		\begin{align}
			d_{W}(F, N)&\leq 4\sqrt2  \sqrt{ \sum_{r=1}^{l-1} {2r \choose r}}  \frac{ \sqrt{\lambda_1}}{\lambda_2} \sqrt{\E[\abs{F}^4]-2 \big(\E[\abs{F}^2]\big)^2- \big| \E[F^2] \big|^2}\label{upper bound 1}\\
			&\leq c_2(p,q,a,b,\sigma) \sqrt{ \sum_{0<i+j<l}\norm{f {\otimes}_{i,j}h}^2_{\FH^{\otimes(2(l-i-j))}}},\label{upper bound 2}\\
			d_{W}(F, N)&\geq c_1(a,b,\sigma)\max\left\lbrace \left| \E\left[F^3 \right] \right|, \left|\E\left[F^2\overline{F} \right] \right|, \E\left[| F| ^4 \right]-2\left(\E\left[ | F | ^2\right]  \right) ^2-\left| \E\left[ F^2\right] \right| ^2 \right\rbrace \label{lower bound 1}
			\\&\geq c_1(p,q,a,b,\sigma)\max\left\lbrace \mathbb{1}_{\left\lbrace p=q \right\rbrace }\left| \sum_{i=0}^{p} \left\langle f\tilde{\otimes}_{i,p-i}f,h \right\rangle _{\FH^{\otimes (2p)}}\right| ,\right. \label{lower bound 2} \\&\left.\qquad\qquad\qquad \mathbb{1}_{\left\lbrace p=q \right\rbrace }\left| \sum_{i=0}^{p} \left\langle f\tilde{\otimes}_{i,p-i}f,f \right\rangle _{\FH^{\otimes (2p)}}\right| ,\sum_{0<i+j<l}\norm{f {\otimes}_{i,j}h}^2_{\FH^{\otimes(2(l-i-j))}}\right\rbrace ,\nonumber
		\end{align}
		where $h$ is the reverse complex conjugate of $f$ and $$\lambda_1=\frac12[\sigma^2+\sqrt{a^2+b^2}],\,\lambda_2=\frac12[\sigma^2-\sqrt{a^2+b^2}]$$ are the two eigenvalues of the matrix $C$.
	\end{theorem}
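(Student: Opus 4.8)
\emph{Reduction to a real chaos vector.} The plan is to reduce everything to a two–dimensional vector of \emph{real} multiple Wiener--It\^o integrals and then apply a multidimensional Malliavin--Stein inequality. By \eqref{pp2 Connection} I may write $\tilde F=(F_1,F_2):=(\RE F,\IM F)=(\mathcal{I}_l(u),\mathcal{I}_l(v))$ with $u,v\in(\FH_\R\oplus\FH_\R)^{\odot l}$ and $l=p+q$. Expanding $\abs{F}^2=(\RE F)^2+(\IM F)^2$ and $F^2=(\RE F)^2-(\IM F)^2+2\mi\,\RE F\,\IM F$ and taking expectations gives $\E[(\RE F)^2]=\tfrac12(\sigma^2+a)$, $\E[(\IM F)^2]=\tfrac12(\sigma^2-a)$ and $\E[\RE F\,\IM F]=\tfrac b2$, so the covariance matrix of $\tilde F$ is exactly $C$; thus $\tilde F$ and $N$ share covariance $C$. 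The hypothesis $\sigma^2>\sqrt{a^2+b^2}$ is precisely $\lambda_2>0$, so $C$ is invertible with $\|C\|_{\mathrm{op}}=\lambda_1$ and $\|C^{-1}\|_{\mathrm{op}}=\lambda_2^{-1}$.

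\emph{The moment upper bound \eqref{upper bound 1}.} I would invoke the multidimensional Malliavin--Stein bound (as in \cite{NourPecc12}) for the chaos vector $\tilde F$: writing $D$ for the Malliavin derivative and $L$ for the Ornstein--Uhlenbeck generator of the underlying real process, so that $-DL^{-1}\mathcal{I}_l(\cdot)=\tfrac1l D\mathcal{I}_l(\cdot)$,
\[
d_{W}(\tilde F,N)\le \|C^{-1}\|_{\mathrm{op}}\,\|C\|_{\mathrm{op}}^{1/2}\Big(\sum_{k,k'=1}^{2}\E\big[\big(C(k,k')-\tfrac1l\innp{DF_k,DF_{k'}}\big)^2\big]\Big)^{1/2},
\]
which already supplies the prefactor $\sqrt{\lambda_1}/\lambda_2$. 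The crux is to show that the covariance–deviation sum is bounded by $32\sum_{r=1}^{l-1}\binom{2r}{r}\,\Delta$, where $\Delta:=\E[\abs{F}^4]-2\sigma^4-\abs{\E[F^2]}^2$; since $\sqrt{32}=4\sqrt2$, taking square roots yields \eqref{upper bound 1}. I would establish this by expanding each $\innp{DF_k,DF_{k'}}$ into Wiener chaos, computing its variance as a weighted sum of squared contraction norms, and matching these against the chaos expansion of $\Delta$; the central binomial coefficients $\binom{2r}{r}$ enter as the combinatorial weights counting the pairings that produce a real contraction of order $2r$.

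\emph{Passage to contractions, \eqref{upper bound 2} and \eqref{lower bound 2}.} This part is algebraic. Using $\overline F=I_{q,p}(h)$, the product formula \eqref{Product_formula} gives $\abs{F}^2=\sum_{i=0}^{p}\sum_{j=0}^{q}\binom{p}{i}^2\binom{q}{j}^2 i!j!\,I_{l-i-j,\,l-i-j}(f\otimes_{i,j}h)$, and then $\E[\abs{F}^4]=\E[\abs{F}^2\cdot\overline{\abs{F}^2}]$ is evaluated by the isometry \eqref{isometry property}, the boundary indices $i+j\in\{0,l\}$ reproducing exactly $2\sigma^4+\abs{\E[F^2]}^2$. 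Hence $\Delta$ equals a sum over $0<i+j<l$ of $\norm{f\otimes_{i,j}h}^2$ together with cross terms of equal total order, which I would squeeze between constant multiples of $\sum_{0<i+j<l}\norm{f\otimes_{i,j}h}^2_{\FH^{\otimes(2(l-i-j))}}$ using Cauchy--Schwarz and the comparability of same-order contractions; discarding the negligible cross contributions is exactly the removal of redundant contractions announced in the introduction. In the same way $\E[F^3]$ and $\E[F^2\overline F]$, computed from $I_{p,q}(f)^2$ via \eqref{Product_formula} and \eqref{isometry property}, are nonzero only when $p=q$ and reduce to $\sum_{i=0}^{p}\innp{f\tilde\otimes_{i,p-i}f,h}$ and $\sum_{i=0}^{p}\innp{f\tilde\otimes_{i,p-i}f,f}$ respectively, which accounts for the indicators $\mathbb{1}_{\{p=q\}}$ in \eqref{lower bound 2}.

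\emph{The lower bound \eqref{lower bound 1} and the main obstacle.} Here I would exploit that $d_W$ dominates the smooth distance whose test functions have first, second and third derivatives bounded by one --- such functions are $1$-Lipschitz, hence admissible in $d_W$. For that smooth distance an Edgeworth/Stein-type expansion isolates $\E[F^3]$, $\E[F^2\overline F]$ and $\Delta$ as the leading contributions, with a multiplier depending only on $C$, i.e. on $a,b,\sigma$, giving $d_W\ge c_1(a,b,\sigma)\max\{\ldots\}$; this continues the optimal smooth-distance analysis of \cite{c23}, and \eqref{lower bound 2} then follows from the contraction identities above. The hard part is precisely this lower bound: one must construct explicit Lipschitz test functions that extract each moment functional while keeping the remainder --- governed by higher moments, which are finite by hypercontractivity but a priori of comparable size --- strictly subordinate, so that a genuine two-sided, non-asymptotic inequality survives.
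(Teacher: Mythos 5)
Your overall architecture coincides with the paper's: write $F=A+\mi B$ as a two-dimensional vector of real $l$-th Wiener--It\^o integrals via \cite[Theorem 3.3]{cl17}, feed it into the multidimensional Malliavin--Stein--Wasserstein bound (the paper cites \cite[Theorem 6.2.2]{NourPecc12} as a black box, which is exactly the inequality you write down, and this supplies the factor $\sqrt{\lambda_1}/\lambda_2$), convert moments into contractions via the product formula \eqref{Product_formula} and isometry \eqref{isometry property} (this is the paper's Proposition \ref{be bound prop 63} and Lemma \ref{3moment}, and your sketch of that algebra is directionally right, the needed tool being the Fubini--Cauchy--Schwarz inequality \eqref{norm inequality} that trades $f\otimes_{i,j}f$ for $f\otimes_{\cdot,\cdot}h$), and get the lower bound \eqref{lower bound 1} by observing that the test functions of the smooth distance in \cite[Theorem 4.1]{c23} are $1$-Lipschitz. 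On that last point you overstate the difficulty: the paper does not construct any test functions; it simply invokes \cite[Theorem 4.1]{c23}, which already contains the Edgeworth-type lower bound you describe, so citing it is enough.

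The genuine gap is at the step you yourself call the crux: bounding $\sum_{k,k'}\E\bigl[\bigl(C(k,k')-\tfrac1l\innp{DF_k,DF_{k'}}\bigr)^2\bigr]$ by a constant times $\Delta:=\E[\abs{F}^4]-2\sigma^4-\abs{\E[F^2]}^2$. Expanding the Malliavin brackets produces contractions of the \emph{real} kernels $u,v$, which the real fourth-moment machinery controls by the componentwise cumulants $x_1=\E[A^4]-3(\E[A^2])^2$ and $x_2=\E[B^4]-3(\E[B^2])^2$; but $\Delta$ expands in terms of the \emph{complex} kernels $f,h$, and the two expansions do not match term by term. The exact relation is $\Delta=x_1+x_2+2\bigl(\E[A^2B^2]-\E[A^2]\E[B^2]-2(\E[AB])^2\bigr)$, so your claimed inequality amounts to proving that the bracketed cross term is nonnegative. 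For jointly Gaussian $(A,B)$ it is identically zero; for the real and imaginary parts of a complex chaos its nonnegativity is a nontrivial fact, namely \cite[Lemma 4.8]{cl17}, proved by product-formula combinatorics, and it is precisely the ingredient the paper combines with $x_1,x_2\ge0$ (\cite[Corollary 5.2.11]{NourPecc12}) and $\sqrt{x_1}+\sqrt{x_2}\le\sqrt{2(x_1+x_2)}$ to produce the constant $4\sqrt2$ in \eqref{upper bound 1}. Without this positivity, ``matching the chaos expansions'' cannot close: the real-kernel contraction sum is not a visible sub-sum of the expansion \eqref{revised version1} of $\Delta$, and the inequality you need has an unresolved sign. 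You must either import \cite[Lemma 4.8]{cl17} (as the paper does) or reprove it; as written, this step of your argument is unjustified, and it is exactly the point where the complex structure does real work.
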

		\begin{remark}\label{optimal bound}
		Note that if $p = q$, then the expression $$\sqrt{ \sum_{0<i+j<l}\norm{f {\otimes}_{i,j}h}^2_{\FH^{\otimes(2(l-i-j))}}}$$ provides an optimal bound, meaning that the lower and upper bounds are consistent after neglecting some constant, under the condition that 
		\begin{align*}
			c_1\leq \frac{	\left| \sum_{i=0}^{p} \left\langle f\tilde{\otimes}_{i,p-i}f,h \right\rangle _{\FH^{\otimes (2p)}}\right|+	\left| \sum_{i=0}^{p} \left\langle f\tilde{\otimes}_{i,p-i}f,f \right\rangle _{\FH^{\otimes (2p)}}\right|}{ \sqrt{ \sum_{0<i+j<l}\norm{f {\otimes}_{i,j}h}^2_{\FH^{\otimes(2(l-i-j))}}}}\leq c_2,
		\end{align*}
		where $c_1$ and $c_2$ are finite positive constants.	Refer to Theorem \ref{example_BM} for an example illustrating this case. 
	\end{remark}

	In Theorem \ref{cFMBE bound}, we get the Berry-Ess\'een upper bound \eqref{upper bound 1} and lower bound \eqref{lower bound 1} in terms of moments by combining \cite[Theorem 6.2.2]{NourPecc12} (which provides an upper bound for real multiple Wiener-It\^o integral vector) with \cite[Theorem 4.1]{c23} (which presents an optimal bound for complex multiple Wiener-It\^o integral under some smooth distance). Furthermore, we formulate the lower and upper bounds in terms of contractions of kernels, which are more convenient to calculate in practical models. The proof of Theorem \ref{cFMBE bound} can be found in Section \ref{section 4}. 
	
	By applying Theorem \ref{cFMBE bound}, we derive two interesting corollaries: Corollary \ref{Coro compare to Campese 1} and Corollary \ref{equilent cond}. Our framework in Theorem \ref{cFMBE bound} is more general compared to \cite{Camp15} and does not require the covariance matrix $C$ to be diagonal. Even under the assumption that $C$ is diagonal as in \cite{Camp15}, namely $\E[F^2]=0$, we improve upon the upper bound given in \cite[Theorem 4.6]{Camp15} in following  corollary.  
	
\begin{corollary}\label{Coro compare to Campese 1}
	Let $p,q\in\mathbb{N}$ be such that $l:=p+q\geq2$ and $F=I_{p,q}(f)$ with $f\in \mathfrak{H}^{\odot p}\otimes \mathfrak{H}^{\odot q} $. Suppose that $\E[\abs{F}^2]= \sigma^2$ and $\E[F^2]=0$ (a sufficient condition for  $\E[F^2]=0$ is $p\neq q$). Let $N\sim \mathcal{CN}(0,\,\sigma^2)$. Then
		\begin{equation}\label{Fourth moment BEB2} 
			d_{W}(F, N)\leq    \frac{4 }{\sigma} \sqrt{\sum_{r=1}^{l-1} {2r \choose r}}  \sqrt{\E[\abs{F}^4]-2 \big(\E[\abs{F}^2]\big)^2}.
		\end{equation}
\end{corollary}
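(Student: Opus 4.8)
The plan is to treat Corollary \ref{Coro compare to Campese 1} as the special case $\E[F^2]=0$ of Theorem \ref{cFMBE bound}; in the notation there this forces $a=b=0$ (the stated sufficient condition $p\neq q$ indeed gives $\E[F^2]=0$, since by the isometry \eqref{isometry property} one has $\E[F^2]=\E[I_{p,q}(f)\overline{I_{q,p}(h)}]=0$ unless $p=q$). First I would verify the hypotheses and identify the limit law. With $a=b=0$ we have $\sqrt{a^2+b^2}=0<\sigma^2$ for nondegenerate $F$, so Theorem \ref{cFMBE bound} applies; moreover $C=\frac12\begin{bmatrix}\sigma^2 & 0\\ 0 & \sigma^2\end{bmatrix}=\frac{\sigma^2}{2}I_2$ is isotropic with $\lambda_1=\lambda_2=\frac{\sigma^2}{2}$. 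By the definition of a complex Gaussian vector, $N\sim\mathcal{CN}(0,\sigma^2)$ means exactly $\tilde N\sim\mathcal N_2(0,\frac{\sigma^2}{2}I_2)$, which coincides with the target $N$ of Theorem \ref{cFMBE bound}; hence $d_W(F,N)$ in the corollary is literally the quantity estimated in \eqref{upper bound 1}.

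The remaining issue is the constant. Plugging $\lambda_1=\lambda_2=\frac{\sigma^2}{2}$ into \eqref{upper bound 1} gives $\frac{\sqrt{\lambda_1}}{\lambda_2}=\frac{\sqrt2}{\sigma}$ and $|\E[F^2]|^2=0$, so a verbatim substitution yields the constant $4\sqrt2\cdot\frac{\sqrt2}{\sigma}=\frac{8}{\sigma}$ --- a factor of two weaker than the asserted $\frac{4}{\sigma}$. To recover the sharp constant I would not cite \eqref{upper bound 1} as a black box but reprove the bound directly in the isotropic case. Writing $(\RE F,\IM F)=(\mathcal I_l(u),\mathcal I_l(v))$ via \eqref{pp2 Connection}, the two-dimensional Wasserstein estimate of \cite[Theorem 6.2.2]{NourPecc12} gives $d_W(F,N)\le\frac{\sqrt{\lambda_1}}{\lambda_2}\sqrt{\sum_{i,j=1}^{2}\E\big[(C_{ij}-\tfrac1l\innp{DF_i,DF_j})^2\big]}=\frac{\sqrt2}{\sigma}\sqrt{\sum_{i,j=1}^{2}\E\big[(C_{ij}-\tfrac1l\innp{DF_i,DF_j})^2\big]}$. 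When $\E[F^2]=0$ the hypothesis forces $\E[\RE F\,\IM F]=0$ and $\E[(\RE F)^2]=\E[(\IM F)^2]=\frac{\sigma^2}{2}$, so the off-diagonal target $C_{12}$ vanishes and the three surviving terms of the inner sum recombine cleanly.

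The heart of the matter is then the sharp variance estimate $\sum_{i,j=1}^{2}\E\big[(C_{ij}-\tfrac1l\innp{DF_i,DF_j})^2\big]\le 8\big(\sum_{r=1}^{l-1}\binom{2r}{r}\big)\big(\E[|F|^4]-2\sigma^4\big)$, since combining it with $\frac{\sqrt2}{\sigma}$ and $\sqrt8=2\sqrt2$ produces exactly $\frac{4}{\sigma}\sqrt{\sum_{r=1}^{l-1}\binom{2r}{r}}\sqrt{\E[|F|^4]-2\sigma^4}$. I would prove this by expanding each $\tfrac1l\innp{DF_i,DF_j}$ through the product formula \eqref{Product_formula} into the contractions $f\otimes_{i,j}h$ of $f$ with its reverse complex conjugate $h$, and matching the resulting sum of $\norm{f\otimes_{i,j}h}^2$ over $0<i+j<l$ against the known expansion of the complex fourth cumulant $\E[|F|^4]-2\sigma^4$; the weight $\sum_{r=1}^{l-1}\binom{2r}{r}$ merely collects the combinatorial coefficients produced by this expansion. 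The main obstacle is precisely this bookkeeping: one must check that the vanishing of $C_{12}$ and the equality of the two diagonal variances let the diagonal and cross contributions combine with the improved constant $8$, rather than the value $32=(4\sqrt2)^2$ that is implicit in the non-isotropic argument behind \eqref{upper bound 1}.
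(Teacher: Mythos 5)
Your diagnosis is correct, and it uncovers a real issue: the paper offers no separate proof of Corollary \ref{Coro compare to Campese 1} (it is presented as a direct specialization of Theorem \ref{cFMBE bound}), and, exactly as you computed, substituting $\lambda_1=\lambda_2=\sigma^2/2$ into \eqref{upper bound 1} yields the constant $8/\sigma$, not $4/\sigma$. The same happens if you instead start from the intermediate bound \eqref{bds 00} and then use \eqref{bds 01} and \eqref{bushengshi 1}. Moreover, the substitution route cannot be patched: the inequality it would require, $\abs{x_1}^{1/2}+\abs{x_2}^{1/2}\le\frac{1}{\sqrt2}\big(\E[\abs{F}^4]-2\sigma^4\big)^{1/2}$ (with $x_1,x_2$ the fourth cumulants of $\RE F$, $\IM F$ as in the proof of Theorem \ref{cFMBE bound}), is false in general: for $F=I_{1,1}(e_1\otimes\overline{e_2})=Z(e_1)\overline{Z(e_2)}$ with $e_1,e_2$ orthonormal one has $\sigma^2=1$, $\E[F^2]=0$, $x_1=x_2=3/4$ and $\E[\abs{F}^4]-2\sigma^4=2$, so the left side is $\sqrt3$ while the right side is $1$. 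Hence the stated constant $4/\sigma$ can only be reached by returning to \cite[Theorem 6.2.2]{NourPecc12} and redoing the variance estimate in the isotropic case, which is precisely your plan; in this respect your proposal is more careful than the paper's own implicit one-line justification.

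Your repair plan does close, but the decisive step, the claimed estimate $\sum_{i,j=1}^2\E\big[(C_{ij}-\tfrac1l\innp{DF_i,DF_j}_{\FH})^2\big]\le 8K\big(\E[\abs{F}^4]-2\sigma^4\big)$ with $K:=\sum_{r=1}^{l-1}\binom{2r}{r}$, is asserted rather than proved, and it is the entire mathematical content of the corollary. It is true, with room to spare, and you do not need the complex-contraction bookkeeping you sketch (which would force you through both the $f\otimes_{i,j}h$ and the $f\otimes_{i,j}f$ families); it is cleaner to work with the real kernels $A=\mathcal{I}_l(u)$, $B=\mathcal{I}_l(v)$ from \eqref{pp2 Connection} and quote the standard one-dimensional real estimates: for the diagonal terms, $\mathrm{Var}\big(\tfrac1l\norm{DF_i}_{\FH}^2\big)\le\tfrac{l-1}{3l}\,x_i$; for the off-diagonal term, expanding $\tfrac1l\innp{DA,DB}_{\FH}$ by the real product formula and combining $\norm{u\otimes_s v}^2=\innp{u\otimes_{l-s}u,\,v\otimes_{l-s}v}\le\norm{u\otimes_{l-s}u}\cdot\norm{v\otimes_{l-s}v}$ with the contraction-versus-cumulant comparison $\norm{u\otimes_r u}^2\le\tfrac{r!^2(l-r)!^2}{l!^4}\,x_1$ (and likewise for $v$) gives $\E\big[(C_{12}-\tfrac1l\innp{DA,DB}_{\FH})^2\big]\le K\sqrt{x_1x_2}$. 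Summing and using \eqref{bushengshi 1}, $\sum_{i,j}\E[\cdots]\le(\tfrac13+K)(x_1+x_2)\le\tfrac43K\big(\E[\abs{F}^4]-2\sigma^4\big)$, far below your factor $8K$; combined with $\sqrt{\lambda_1}/\lambda_2=\sqrt2/\sigma$ this proves \eqref{Fourth moment BEB2}, indeed with constant $2\sqrt{2/3}/\sigma$ in place of $4/\sigma$. So: right identification of the discrepancy, right strategy, but to turn the outline into a proof you must actually write out (or precisely cite) these two variance estimates, since they cannot be recovered from any inequality displayed in the paper.
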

		
		\begin{remark}\label{compare to Campese 1}
		This bound appreciably improves the following bound in \cite[Theorem 4.6]{Camp15} as
		\begin{equation}\label{upper bound in campese15}
			d_{W}(F, N)\le \frac{\sqrt2 }{\sigma} \sqrt{ \E[\abs{F}^4]-2 \big(\E[\abs{F}^2]\big)^2 + \sqrt{\frac12\E[\abs{F}^4]\Big(\E[\abs{F}^4]-2 \big(\E[\abs{F}^2]\big)^2 } \Big)}.
		\end{equation} 
		Specifically, for a sequence of complex Wiener-It\^o integrals $\left\lbrace F_n=I_{p,q}(f_n) \right\rbrace _{n\geq1}$, where $f_n\in \mathfrak{H}^{\odot p}\otimes \mathfrak{H}^{\odot q} $, suppose that $\E[\abs{F_n}^2]= \sigma^2$, $\E[F_n^2]=0 $ and $F_n$ converges in distribution to $N\sim \mathcal{CN}(0,\,\sigma^2)$ (which is equivalent to $\E[\abs{F_n}^4]-2 \big(\E[\abs{F_n}^2]\big)^2\rightarrow 0$ as $n\rightarrow \infty$), then by \eqref{upper bound in campese15},
		\begin{equation*}
			d_{W}(F_n, N)\leq c(p,q,\sigma)\Big(\E[\abs{F_n}^4]-2 \big(\E[\abs{F_n}^2]\big)^2  \Big)^{\frac14}.
		\end{equation*}	
		This upper bound is far greater than \eqref{Fourth moment BEB2}, that is 
		\begin{equation*}
			d_{W}(F_n, N)\leq c(p,q,\sigma)\Big(\E[\abs{F_n}^4]-2 \big(\E[\abs{F_n}^2]\big)^2  \Big)^{\frac12}.
		\end{equation*}	 
		We refer readers to Remark \ref{compare to Campese 2} for further explanation.
	\end{remark}

 By \eqref{revised version1}, we know that 
		\begin{equation}\label{upper bound in more contraction}
		\begin{aligned}
			&\E[\abs{F}^4]-2 \big(\E[\abs{F}^2]\big)^2- \big| \E[F^2] \big|^2
			\\\leq &\,c_2(p,q,a,b,\sigma)  \left( \sum_{0<i+j<l}\norm{f {\otimes}_{i,j}h}^2_{\FH^{\otimes(2(l-i-j))}}+ \mathbb{1}_{\left\lbrace p\neq q\right\rbrace }\norm{f\otimes_{p\wedge q,p\wedge q}f}^2_{\FH^{\otimes(2(l-l'))}} \right. \\&  \left.\qquad\qquad\qquad\quad  +\sum_{0<i+j<l^{'}}\norm{f {\otimes}_{i,j}f}^2_{\FH^{\otimes(2(l-i-j))}}
			\right),
		\end{aligned}
		\end{equation}
		where $l'=2(p\wedge q)$. Compared to \eqref{upper bound in more contraction}, we remove redundant contractions $\norm{f {\otimes}_{i,j}f}^2_{\FH^{\otimes(2(l-i-j))}}$ for $0<i+j\leq l'$ and obtain a simplified upper bound in \eqref{upper bound 2}. Combining \eqref{upper bound 2} with the fact that the topology induced by $d_{W}$ is stronger than the topology induced by weak convergence (see \cite[Proposition C.3.1]{NourPecc12}), we obtain the following version of the complex Fourth Moment Theorem.

	\begin{corollary}\label{equilent cond}
Let $p,q \in \mathbb{N}$ be such that $l:=p+q\ge 2$. Consider a sequence of complex multiple Wiener-It\^o integral $\set{F_{n}=I_{p,q}(f_n):=A_n+\mathrm{i}B_n}_{n\geq1}$, where $f_n\in \mathfrak{H}^{\odot p}\otimes \mathfrak{H}^{\odot q} $. If $\E[\abs{F_n}^2]\to \sigma^2$ and $\E[F_n^2]\to a+\mi b $ as $n\to \infty$, then the following statements are equivalent:
	\begin{itemize}
		\item[\textup{(i).}]The sequence $\left\lbrace (A_n,B_n)\right\rbrace _{n\geq1}$ converges in distribution to $N\sim\mathcal{N}_2(0,C)$ as $n\rightarrow \infty$, where $C= \frac{1}{2}\begin{bmatrix}\sigma^2+a & b\\ b & \sigma^2-a  \end{bmatrix}$.
		\item[\textup{(ii).}]  $\norm{f_n{\otimes}_{i,j} h_n}_{\FH^{\otimes ( 2(l-i-j))}}\to 0 $ as $n\rightarrow \infty$ for any $0<i+j< l$, where $h_n\in \mathfrak{H}^{\odot q}\otimes \mathfrak{H}^{\odot p} $ is the reverse complex conjugate of $f_n$.
	\end{itemize}
	\end{corollary}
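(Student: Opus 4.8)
The plan is to read off the whole equivalence from the two-sided estimate of Theorem~\ref{cFMBE bound}, proving the two implications separately and invoking hypercontractivity only to bridge the gap between convergence in distribution and convergence in $d_W$. First I would fix notation uniform in $n$: write $\sigma_n^2=\E[\abs{F_n}^2]$, $\E[F_n^2]=a_n+\mi b_n$, and let $N_n\sim\mathcal N_2(0,C_n)$ be the Gaussian whose covariance $C_n$ is formed from $(\sigma_n^2,a_n,b_n)$ exactly as $C$ is formed from $(\sigma^2,a,b)$. Since $(\sigma_n^2,a_n,b_n)\to(\sigma^2,a,b)$ and (as is implicit in the statement, so that $N$ is a genuine Gaussian) $\sigma^2>\sqrt{a^2+b^2}$, the non-degeneracy $\sigma_n^2>\sqrt{a_n^2+b_n^2}$ holds for all large $n$; hence Theorem~\ref{cFMBE bound} applies to the pair $(F_n,N_n)$ with constants $c_1(p,q,a_n,b_n,\sigma_n)$ and $c_2(p,q,a_n,b_n,\sigma_n)$ that stay bounded along the sequence. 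Moreover $d_W(N_n,N)\to0$, because convergence of covariance matrices entails Wasserstein convergence of the corresponding Gaussians.

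For the implication (ii)$\Rightarrow$(i) I would argue directly. Assuming $\norm{f_n\otimes_{i,j}h_n}\to0$ for every $0<i+j<l$, the upper bound~\eqref{upper bound 2} gives $d_W(F_n,N_n)\to0$; combining this with $d_W(N_n,N)\to0$ through the triangle inequality yields $d_W(F_n,N)\to0$, and since $d_W$ metrizes a topology stronger than weak convergence (\cite[Proposition C.3.1]{NourPecc12}), the pair $(A_n,B_n)$ converges in distribution to $N$. This is the easy half.

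The implication (i)$\Rightarrow$(ii) is where the real work lies, because convergence in distribution is strictly weaker than convergence in $d_W$, so the lower bound~\eqref{lower bound 2} cannot be invoked directly. Here I would exploit that each $F_n$ lives in the fixed chaos $\mathscr H_{p,q}(Z)$: by hypercontractivity one has $\E[\abs{F_n}^r]\le c_r(\sigma_n^2)^{r/2}$ for every $r\ge2$, so the sequence is bounded in every $L^r$ and hence uniformly integrable, whence convergence in distribution upgrades to convergence of all moments. In particular $\E[\abs{F_n}^4]\to\E[\abs{Z}^4]$, where $Z$ is the complex Gaussian attached to $N$, and a Wick/Isserlis computation gives $\E[\abs{Z}^4]=2\sigma^4+\abs{a+\mi b}^2$. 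Consequently the fourth-moment defect $\Delta_n:=\E[\abs{F_n}^4]-2\big(\E[\abs{F_n}^2]\big)^2-\abs{\E[F_n^2]}^2$ tends to $0$. Feeding $\Delta_n\to0$ into the upper bound~\eqref{upper bound 1} yields $d_W(F_n,N_n)\to0$, and then the contraction term of the lower bound~\eqref{lower bound 2} forces $\sum_{0<i+j<l}\norm{f_n\otimes_{i,j}h_n}^2\to0$, so each individual contraction vanishes in the limit, which is precisely (ii).

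I expect the moment-upgrading step in (i)$\Rightarrow$(ii) to be the main obstacle: without the hypercontractive boundedness of the higher moments on a fixed chaos, one cannot convert weak convergence into the quantitative control $\Delta_n\to0$ that drives the lower bound backwards. The remaining points are routine bookkeeping, namely checking that the non-degeneracy and the boundedness of $c_1,c_2$ persist for large $n$, and matching the fixed target $N$ against the moment-matched Gaussians $N_n$ via $d_W(N_n,N)\to0$.
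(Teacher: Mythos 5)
Your proposal is correct, and its treatment of (ii)$\Rightarrow$(i) coincides with the paper's own one-line argument: the contraction upper bound \eqref{upper bound 2} forces $d_W(F_n,\cdot)\to0$, and $d_W$ dominates weak convergence by \cite[Proposition C.3.1]{NourPecc12}. Your extra care in introducing the moment-matched Gaussians $N_n$ and then using $d_W(F_n,N)\le d_W(F_n,N_n)+d_W(N_n,N)$ is a refinement the paper silently glosses over, since for finite $n$ the moments of $F_n$ only converge to, rather than equal, $(\sigma^2,a,b)$. Where you genuinely diverge is in (i)$\Rightarrow$(ii): the paper does not re-prove this direction but imports it from the previously known complex Fourth Moment Theorem (\cite[Theorem 1.3]{chw17}, \cite[Theorem 3.14]{cl19}), which gives that (i) is equivalent to $\Delta_n:=\E[\abs{F_n}^4]-2\big(\E[\abs{F_n}^2]\big)^2-\abs{\E[F_n^2]}^2\to0$, and then passes from $\Delta_n\to0$ to (ii) via the two-sided estimate of Proposition \ref{be bound prop 63}, whose constants depend only on $(p,q)$. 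You instead rebuild this implication from scratch: hypercontractivity on a fixed chaos (legitimate here, since by \eqref{pp2 Connection} the real and imaginary parts of $F_n$ are real Wiener--It\^o integrals of fixed order $p+q$ with bounded variances), uniform integrability and the Wick identity $\E[\abs{Z}^4]=2\sigma^4+a^2+b^2$ to get $\Delta_n\to0$, and then the sandwich ``$\Delta_n\to0$ implies $d_W(F_n,N_n)\to0$ by \eqref{upper bound 1}, which implies vanishing contractions by \eqref{lower bound 2}.'' This makes the corollary self-contained where the paper leans on the literature, which is a genuine merit; the price is that your final step needs the lower-bound constant $c_1(p,q,a_n,b_n,\sigma_n)$ to stay bounded away from zero along the sequence, a continuity property of the constants inherited from \cite[Theorem 4.1]{c23} that you assert but do not verify. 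Passing directly from $\Delta_n\to0$ to (ii) through the purely combinatorial estimate of Proposition \ref{be bound prop 63} (constants depending only on $p,q$) would eliminate this dependence. Note finally that both your argument and the paper's tacitly require the nondegeneracy $\sigma^2>\sqrt{a^2+b^2}$, which the corollary as stated does not impose; you at least flag this explicitly.
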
 

	\begin{remark} 
		\begin{itemize}
			\item[\textup{(1).}]
	Theorem \ref{equilent cond} simplifies \cite[Theorem 1.3]{chw17} and \cite[Theorem 3.14]{cl19}, which states that condition (i) is equivalent to that condition (ii) and  $\lim\limits_{n\rightarrow\infty}\norm{f_n\otimes_{i,j} f_n}_{\FH^{\otimes ( 2(l-i-j))}}= 0$ for any $0<i+j< 2(p\wedge q)$. This improvement is useful when applying the complex Fourth Moment Theorem to establish the asymptotic normality of $\set{F_{n}=I_{p,q}(f_n)}_{n\geq1}$. For example, it allows us to significantly shorten the proof of \cite[Lemma 3.6]{chw17} while still ensuring the asymptotic normality of the complex random variable $F_T$ as given in \eqref{F_T}.
		\item[\textup{(2).}] 	There are contraction conditions analogous to  Corollary \ref{equilent cond} in \cite[Proposition 5]{mp08}, although the definition of complex multiple Wiener-It\^o integral in \cite[Section 4]{mp08} differs from that in our framework and is of a complex kernel with respect to a real isonormal process. 
		\end{itemize}
	\end{remark}

		According to \cite[Theorem 1.3]{chw17} or \cite[Theorem 3.14]{cl19}, we know that (i) and (ii) are both equivalent to that $$\E[\abs{F_n}^4]-2 \big(\E[\abs{F_n}^2]\big)^2- \big| \E[F_n^2] \big|^2\rightarrow 0,\quad n\rightarrow \infty.$$
		See \cite[Proposition 3.12]{cl19} or \eqref{revised version1}, \eqref{revised version2} for different expressions of the quantity $\E[\abs{F_n}^4]-2 \big(\E[\abs{F_n}^2]\big)^2- \big| \E[F_n^2] \big|^2$. Compared to the fourth cumulant of the real Wiener-It\^o integral in \cite[Equation 5.2.5, Equation 5.2.6]{NourPecc12}, the expression \eqref{revised version1} is more  intricate and involves more contractions. Due to this reason and the complexity of the symmetrization of $f_n{\otimes}_{i,j} h_n$ (see \cite[Equation 5.1]{ito}), unlike the Fourth Moment Theorem for real Wiener-It\^o integral (see \cite[Theorem 5.2.7]{NourPecc12}), it is uncertain whether conditions (i) and (ii) are equivalent to 
		$$\norm{f_n\tilde{{\otimes}}_{i,j} h_n}_{\FH^{\otimes ( 2(l-i-j))}}\to 0, $$ for any $0<i+j< p+q$, unless the estimate in \eqref{norm inequality} also holds for $f_n\tilde{{\otimes}}_{i,j} f_n$. This illustrates that the theory of complex multiple Wiener-It\^o integral is inherently more complicated than the real one. However, this problem can be ignored since one always calculates $\norm{f_n\otimes_{i,j} h_n}_{\FH^{\otimes ( 2(l-i-j))}}$  rather than more complicated quantity $\norm{f_n\tilde{{\otimes}}_{i,j} h_n}_{\FH^{\otimes ( 2(l-i-j))}}$ to yield the asymptotic normality of $\set{F_{n}=I_{p,q}(f_n)}_{n\geq1}$ by utilizing Theorem \ref{equilent cond}.

	\subsection{Some technical estimates and proof of Theorem \ref{cFMBE bound}}\label{section 4}
	
	In this section, for $k=1,2$, we let $f_k\in \FH^{\odot p_k}\otimes \FH^{\odot q_k}$, $h_k\in \FH^{\odot q_k}\otimes \FH^{\odot p_k}$ be the reverse complex conjugate of $f_k$ and $l_k=p_k+q_k$.
	
	\begin{lemma}\label{basic inequlity}
For $0\leq i\leq p_1\wedge q_2$ and $0\leq j\leq q_1\wedge p_2$,
		\begin{align}
			\norm{f_1\tilde{\otimes}_{i,j} f_2}_{\FH^{\otimes(l_1+l_2-2(i+j))}}&\le \norm{f_1\otimes_{i,j} f_2}_{\FH^{\otimes(l_1+l_2-2(i+j))}}\le \norm{f_1}_{\FH^{\otimes l_1}}\cdot\norm{f_2}_{\FH^{\otimes l_2}},\label{contraction symmetry inequality}\\
			\norm{f_1\otimes_{i,j} f_2}^2_{\FH^{\otimes(l_1+l_2-2(i+j))}}&\le  \frac12\norm{f_1 {\otimes}_{p_1-i,q_1-j} h_1 }^2_{\FH^{\otimes ( 2( i+j))}} +\frac12\norm{f_2 {\otimes}_{p_2-j,q_2-i} h_2 }^2_{\FH^{\otimes ( 2( i+j))}}.\label{norm inequality} \end{align}
	\end{lemma}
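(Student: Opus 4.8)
The plan is to treat the two displays separately: \eqref{contraction symmetry inequality} is essentially soft, whereas \eqref{norm inequality} is the crux and rests on a complex analogue of the classical ``norm of a contraction'' identity.

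For \eqref{contraction symmetry inequality}, the left inequality is immediate once one observes that $f_1\tilde{\otimes}_{i,j}f_2$ is the image of $f_1\otimes_{i,j}f_2$ under the orthogonal projection of $\FH^{\otimes(p_1+p_2-i-j)}\otimes\FH^{\otimes(q_1+q_2-i-j)}$ onto its symmetric subspace (equivalently, the average over the relevant product of symmetric groups), and projections do not increase the norm. For the right inequality I would view the contraction as a composition of Hilbert--Schmidt operators: flattening the $i+j$ summed labels $l_1,\dots,l_{i+j}$ into a single ``bridge'' index, $f_1$ and $f_2$ become Hilbert--Schmidt maps from the bridge space to their respective free-index spaces, and $f_1\otimes_{i,j}f_2$ is, up to the conjugations built into the complex inner product (which preserve Hilbert--Schmidt norms), their composition. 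Submultiplicativity $\norm{AB}_{\mathrm{HS}}\le\norm{A}_{\mathrm{HS}}\norm{B}_{\mathrm{HS}}$ then gives $\norm{f_1\otimes_{i,j}f_2}\le\norm{f_1}_{\FH^{\otimes l_1}}\norm{f_2}_{\FH^{\otimes l_2}}$; equivalently, one expands $\norm{f_1\otimes_{i,j}f_2}^2$ in the orthonormal basis $\set{e_k}$ and applies Cauchy--Schwarz in the contracted labels, for each fixed pair of free multi-indices.

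For \eqref{norm inequality} the strategy is first to prove the exact identity
\begin{equation*}
	\norm{f_1\otimes_{i,j}f_2}^2_{\FH^{\otimes(l_1+l_2-2(i+j))}}=\innp{f_1\otimes_{p_1-i,\,q_1-j}h_1,\ f_2\otimes_{p_2-j,\,q_2-i}h_2}_{\FH^{\otimes(2(i+j))}}
\end{equation*}
(possibly after replacing the second factor by its reverse complex conjugate, which leaves the norm unchanged), and then to conclude. Both factors land in $\FH^{\otimes(i+j)}\otimes\FH^{\otimes(i+j)}$, so a degree count already pins down the contraction indices: the free slots of $f_1$ in $f_1\otimes_{i,j}f_2$ are its $p_1-i$ holomorphic and $q_1-j$ anti-holomorphic ones, and contracting exactly these against the reverse complex conjugate $h_1$ yields $f_1\otimes_{p_1-i,q_1-j}h_1$; the index swap $p_2-j,\,q_2-i$ in the $f_2$-factor reflects that in $f_1\otimes_{i,j}f_2$ it is $j$ holomorphic and $i$ anti-holomorphic slots of $f_2$ that are contracted. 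Granting the identity, Cauchy--Schwarz gives $\norm{f_1\otimes_{i,j}f_2}^2\le\norm{f_1\otimes_{p_1-i,q_1-j}h_1}\,\norm{f_2\otimes_{p_2-j,q_2-i}h_2}$, and the elementary bound $xy\le\tfrac12 x^2+\tfrac12 y^2$ delivers \eqref{norm inequality}.

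The main obstacle is establishing the identity itself. One expands $\norm{f_1\otimes_{i,j}f_2}^2=\innp{f_1\otimes_{i,j}f_2,\,f_1\otimes_{i,j}f_2}$ in $\set{e_k}$, writes out all four partial inner products, and reorganizes the resulting multiple sum so that the two copies of $f_1$ are paired over their free indices and likewise the two copies of $f_2$. The delicate point, absent in the real case, is the conjugation carried by the complex inner product: pairing a copy of $f_1$ with its conjugate over the free slots is exactly what converts $f_1$ into its reverse complex conjugate $h_1$, and one must track carefully which holomorphic and anti-holomorphic slots get conjugated so that the recombined sums are genuinely the contractions $f_1\otimes_{p_1-i,q_1-j}h_1$ and $f_2\otimes_{p_2-j,q_2-i}h_2$ rather than some permuted or mis-conjugated variant. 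Once the bookkeeping of the bi-indexed contraction is set up consistently with the definition of $\otimes_{i,j}$ and with the symmetry of $f_1,f_2$, the identity follows, and the two soft estimates of \eqref{contraction symmetry inequality} complete the proof.
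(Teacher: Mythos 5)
Your proposal is correct and follows essentially the same route as the paper: the paper also handles \eqref{contraction symmetry inequality} by the averaging/projection property of symmetrization (phrased as Minkowski's inequality) together with a generalized Cauchy--Schwarz inequality, and proves \eqref{norm inequality} through precisely the inner-product identity you describe, namely $\norm{f_1\otimes_{i,j} f_2}^2_{\FH^{\otimes(l_1+l_2-2(i+j))}}=\innp{f_1\otimes_{p_1-i,q_1-j} h_1,\,h_2\otimes_{q_2-i, p_2-j} f_2 }_{\FH^{\otimes(2(i+j))}}$ (obtained by Fubini-type reorganization of the basis expansion), followed by Cauchy--Schwarz and $2ab\le a^2+b^2$. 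Your hedge about replacing the second factor by its reverse complex conjugate is exactly how the paper proceeds, via the identity $\norm{f_1 {\otimes}_{i,j} f_2}=\norm{f_2 {\otimes}_{j,i} f_1}$, so the two arguments coincide.
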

	\begin{proof} 
	The first and second inequalities in \eqref{contraction symmetry inequality} are respectively from Minkowski's inequality and the generalized Cauchy-Schwarz inequality (\cite[Lemma 4.1]{Hermine2012}). 
		Combining Fubini's theorem, Cauchy-Schwarz inequality and the fact that 
		\begin{equation}\label{communicative law}
				\norm{f_1 {\otimes}_{i,j} f_2}_{\FH^{\otimes(l_1+l_2-2(i+j))}}=\norm{f_2 {\otimes}_{j,i} f_1}_{\FH^{\otimes(l_1+l_2-2(i+j))}},
		\end{equation}
 we have that 
			\begin{align}
			\norm{f_1\otimes_{i,j} f_2}^2_{\FH^{\otimes(l_1+l_2-2(i+j))}}&=\innp{f_1\otimes_{p_1-i,q_1-j} h_1,\,h_2\otimes_{q_2-i, p_2-j} f_2 }_{\FH^{\otimes(2(i+j))}}\nonumber\\
			&\le \norm{f_1 {\otimes}_{p_1-i,q_1-j} h_1 }_{\FH^{\otimes ( 2( i+j))}} \cdot\norm{f_2 {\otimes}_{p_2-j,q_2-i} h_2 }_{\FH^{\otimes ( 2( i+j))}}\label{before norm inequality}. \end{align}
			Then \eqref{norm inequality} is from the inequality $2ab\le a^2+b^2$ for $a,b\in \mathbb{R}$.
	\end{proof}

		Especially, applying \eqref{norm inequality} to $f_1=f_2=f \in \FH^{\odot p}\otimes \FH^{\odot q}$, we get that for any $0\leq i,j\le p\wedge q$,
	\begin{equation}\label{norm inequality 0}
		2 \norm{f {\otimes}_{i,j} f }^2_{\FH^{\otimes ( 2(l-i-j))}} \le  \norm{f {\otimes}_{p-i,q-j} h }^2_{\FH^{\otimes ( 2( i+j))}} +\norm{f {\otimes}_{p-j,q-i} h }^2_{\FH^{\otimes ( 2( i+j))}},
	\end{equation}
	where $h$ is the reverse complex conjugate of $f$ and $l=p+q$.
	
Suppose that $(p_k,q_k)\in \N^2$, $k=1,2$.  If $(p_1,q_1)\neq (p_2,q_2) $ and $p_1\ge p_2$, $q_1\ge q_2$, then we denote by $$(p_1,\,q_1)\curlyeqsucc (p_2,\,q_2)\mbox{ or }(p_2,\,q_2)\curlyeqprec (p_1,\,q_1).$$  If we remove the condition $(p_1,\,q_1)\neq (p_2,\,q_2) $ in the above definition then `$\curlyeqsucc  $' gives a partial order relation in terms of set theory. It is obvious that 
\begin{align*}
	\mathbb{1}_{\set{ (p_2,q_2)\curlyeqsucc (p_1,q_1)}}&=\mathbb{1}_{\set{p_1< p_2,q_1\le q_2}}+\mathbb{1}_{\set{p_1=p_2,q_1<q_2}} =\mathbb{1}_{\set{p_1\le p_2,q_1< q_2}}+\mathbb{1}_{\set{p_1<p_2,q_1=q_2}},\\
	\mathbb{1}_{\set{ (p_1,q_1)\neq (p_2,q_2)}}&=\mathbb{1}_{\set{(p_2,q_2)\curlyeqsucc (p_1,q_1)}} +\mathbb{1}_{\set{(p_1,q_1)\curlyeqsucc  (p_2,q_2)}} +\mathbb{1}_{\set{p_1>p_2,q_1<q_2}}+\mathbb{1}_{\set{p_1<p_2,q_1>q_2}}.
\end{align*}	
For ease of notations, we write 
	\begin{align*}
		\mathcal{A}(f_1,h_1,f_2,h_2)=&\sum_{0<i+j<l_1}\norm{f_1 {\otimes}_{i,j}h_1}^2_{\FH^{\otimes(2(l_1-i-j))}}+\sum_{0<i+j<l_2}\norm{f_2 {\otimes}_{i,j}h_2}^2_{\FH^{\otimes(2(l_2-i-j))}},\\
		\mathcal{B}(f_1,h_1,f_2,h_2)=&\mathbb{1}_{\set{ (p_2,q_2)\curlyeqsucc (q_1,p_1)}} \norm{f_1}^2_{\FH^{\otimes l_1}}\cdot\norm{f_2\otimes_{p_2-q_1,q_2-p_1}h_2}_{\FH^{\otimes{2l_1} }} \\ &+\mathbb{1}_{\set{ (p_2,q_2)\curlyeqsucc (p_1,q_1)}}\norm{f_1}^2_{\FH^{\otimes l_1}}\cdot\norm{f_2\otimes_{p_2-p_1,q_2-q_1}h_2}_{\FH^{\otimes{2l_1} }} \\
		& +\mathbb{1}_{\set{(p_1,\,q_1)\curlyeqsucc (q_2,\,p_2)}}\norm{f_2 }^2_{\FH^{\otimes l_2}}\cdot\norm{f_1\otimes_{p_1-q_2,q_1-p_2}h_1}_{\FH^{\otimes{2l_2} }} \\  
		&+\mathbb{1}_{\set{(p_1,\,q_1)\curlyeqsucc (p_2,\,q_2)}}\norm{f_2 }^2_{\FH^{\otimes l_2}}\cdot\norm{f_1\otimes_{p_1-p_2,q_1-q_2}h_1}_{\FH^{\otimes{2l_2} }}.
	\end{align*} 
If $f_1=f_2=f$, we write $	\mathcal{A}(f,h):=\mathcal{A}(f,h,f,h)$, where $h$ is the reverse complex conjugate of $f$.	
	\begin{prop}\label{jproo key estimate}
		For $k=1,2$, let $F_k=I_{p_k,q_k}(f_k)$ with $f_k\in \FH^{\odot p_k}\otimes \FH^{\odot q_k}$.
		Then there exists a constant $c$ depending on $p_k,q_k$, $k=1,2$, such that 
		\begin{equation}\label{key estimate}
		\begin{aligned}
			\quad & \mathrm{Cov}(\abs{F_1}^2,\, \abs{F_2}^2)-\abs{\E[F_1\overline{F_2}]}^2-\abs{\E[F_1F_2]}^2
			\le c\left( 	\mathcal{A}(f_1,h_1,f_2,h_2)+	\mathcal{B}(f_1,h_1,f_2,h_2) \right) .
		\end{aligned}
	\end{equation}
	\end{prop}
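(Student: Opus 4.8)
The plan is to expand each modulus square through the product formula and then read the covariance off chaos by chaos. Since $\overline{F_k}=I_{q_k,p_k}(h_k)$, formula \eqref{Product_formula} gives
\begin{equation*}
\abs{F_k}^2=F_k\overline{F_k}=\sum_{i=0}^{p_k}\sum_{j=0}^{q_k}\binom{p_k}{i}^2\binom{q_k}{j}^2 i!\,j!\;I_{l_k-i-j,\,l_k-i-j}\big(f_k\otimes_{i,j}h_k\big),
\end{equation*}
so that $\abs{F_k}^2$ lives entirely in the diagonal chaoses $\mathscr{H}_{m,m}$, and its mean is the single term with $(i,j)=(p_k,q_k)$, which by \eqref{isometry property} equals $p_k!q_k!\norm{f_k}^2_{\FH^{\otimes l_k}}$. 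First I would subtract the two means and expand the product $(\abs{F_1}^2-\E[\abs{F_1}^2])(\abs{F_2}^2-\E[\abs{F_2}^2])$. Taking expectations and invoking \eqref{isometry property} together with the orthogonality of distinct chaoses collapses $\mathrm{Cov}(\abs{F_1}^2,\abs{F_2}^2)$ into a double sum, over indices $(i,j)\neq(p_1,q_1)$ and $(i',j')\neq(p_2,q_2)$ with matched levels $l_1-i-j=l_2-i'-j'=:m$, of terms $c\,\big\langle \widetilde{f_1\otimes_{i,j}h_1},\,f_2\otimes_{i',j'}h_2\big\rangle_{\FH^{\otimes 2m}}$, where each constant $c$ collects the combinatorial factors above together with $m!$ and so depends only on the $p_k,q_k$.

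I would then organize this double sum according to whether each contraction index is \emph{interior}, meaning $0<i+j<l_1$ (resp.\ $0<i'+j'<l_2$), or \emph{top-order}, meaning $i=j=0$ (resp.\ $i'=j'=0$); note $\widetilde{f_1\otimes_{0,0}h_1}=\widetilde{f_1\otimes h_1}$ and $\norm{f_1\otimes h_1}_{\FH^{\otimes 2l_1}}=\norm{f_1}^2_{\FH^{\otimes l_1}}$, because $h_1$ is the reverse complex conjugate of $f_1$. For an interior$\times$interior pair, Cauchy--Schwarz followed by \eqref{contraction symmetry inequality} bounds the inner product by $\norm{f_1\otimes_{i,j}h_1}\cdot\norm{f_2\otimes_{i',j'}h_2}$, and $2ab\le a^2+b^2$ places the two squared factors inside $\mathcal{A}(f_1,h_1,f_2,h_2)$. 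For a top$\times$interior pair the top factor contributes the scalar $\norm{f_1}^2_{\FH^{\otimes l_1}}$, while the partner satisfies $i'+j'=l_2-l_1$; transferring the symmetrization onto $f_1\otimes h_1$ and matching its $l_1$ holomorphic and $l_1$ antiholomorphic legs against those of $f_2\otimes_{i',j'}h_2$ shows that the ``direct'' and ``conjugate-swapped'' alignments are feasible exactly when $(p_2,q_2)\curlyeqsucc(p_1,q_1)$ and $(p_2,q_2)\curlyeqsucc(q_1,p_1)$, and reproduce precisely the first two summands of $\mathcal{B}(f_1,h_1,f_2,h_2)$; the symmetric role of $F_2$ yields the remaining two. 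Any leftover ``mixed'' alignment is a genuine partial contraction that is reabsorbed into $\mathcal{A}$ via Lemma \ref{basic inequlity} and the commutation rule \eqref{communicative law}.

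Finally, a top$\times$top pair can occur only if $l_1=l_2$, in which case it is the single inner product $\langle \widetilde{f_1\otimes h_1},\,f_2\otimes h_2\rangle$ (times a constant). Expanding the symmetrization, its fully direct alignment equals $\abs{\E[F_1\overline{F_2}]}^2$ (nonzero only when $(p_1,q_1)=(p_2,q_2)$, in accordance with \eqref{isometry property}) and its fully conjugate-swapped alignment equals $\abs{\E[F_1F_2]}^2$ (nonzero only when $(p_1,q_1)=(q_2,p_2)$); these are exactly the two quantities subtracted on the left of \eqref{key estimate}, and the Wick structure of jointly Gaussian vectors guarantees they are extracted with coefficient one. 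The partially aligned remainders are again lower-order contractions controlled by $\mathcal{A}$ and $\mathcal{B}$. Gathering the three families and discarding the cancelled Gaussian part then produces \eqref{key estimate} with a constant depending only on $p_k,q_k$.

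The hard part will be the bookkeeping in the last two paragraphs: I must show that, after the symmetrization $\widetilde{f_1\otimes h_1}$ redistributes its holomorphic and antiholomorphic indices, exactly the fully aligned pieces reproduce $\abs{\E[F_1\overline{F_2}]}^2+\abs{\E[F_1F_2]}^2$, while every remaining piece is a partial contraction that Lemma \ref{basic inequlity} converts into a term of $\mathcal{A}$ or into one of the four feasibility-constrained products of $\mathcal{B}$. Keeping track of which partial-order case (namely $\curlyeqsucc$ or its reverse, versus the two incomparable cases) activates which contraction index, and verifying that no index outside the list defining $\mathcal{A}$ and $\mathcal{B}$ ever survives, is the delicate combinatorial core of the argument.
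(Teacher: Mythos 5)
Your route is genuinely different from the paper's. The paper does not re-derive any moment expansion at all: it quotes the exact fourth-moment identity of \cite[Lemma 3.1]{ch17} (displayed as \eqref{express 4moment}), in which the two Gaussian quantities $\abs{\E[F_1\overline{F_2}]}^2$ and $\abs{\E[F_1F_2]}^2$ are already subtracted and the left-hand side of \eqref{key estimate} is written as four groups of \emph{cross}-contraction norms, namely $\norm{f_1\otimes_{i,j}h_2}^2$ and $\norm{\sum_{i+j=r}c_{i,j}\,f_1\tilde{\otimes}_{i,j}f_2}^2$. Its proof then consists solely of absorbing these four groups into $\mathcal{A}+\mathcal{B}$ via Lemma \ref{basic inequlity}, the partial-order case analysis entering exactly where the extreme cross contraction $\norm{f_1\otimes_{p_1\wedge p_2,\,q_1\wedge q_2}h_2}^2$ must be bounded by \eqref{before norm inequality}, which keeps the product structure $\norm{f_1}^2\cdot\norm{f_2\otimes\cdots h_2}$, rather than by \eqref{norm inequality}. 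You instead expand each $\abs{F_k}^2$ into diagonal chaoses and use orthogonality, producing inner products of \emph{self}-contractions $\langle\widetilde{f_1\otimes_{i,j}h_1},f_2\otimes_{i',j'}h_2\rangle$; carrying this out amounts to re-proving the identity of \cite{ch17} along the way. That is legitimate and more self-contained, and your interior$\times$interior analysis and the coefficient-one extraction of the two Gaussian terms from the top$\times$top pairs do check out.

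The gap is that the step you defer is not mere bookkeeping: it contains the one place where the natural move fails. In your double sum, a top$\times$interior pair occurs for \emph{every} index $(i',j')$ with $i'+j'=l_2-l_1$, not only for the two ``feasible'' ones, and if you estimate such a term by Cauchy--Schwarz at the level of the pairing you get $\norm{f_1}^2_{\FH^{\otimes l_1}}\cdot\norm{f_2\otimes_{i',j'}h_2}_{\FH^{\otimes 2l_1}}$ for all of them. For the two special indices this is a summand of $\mathcal{B}$, but for the remaining indices it is of order $\sqrt{\mathcal{A}}$ times an $O(1)$ factor, and no constant $c$ can make that $\le c\left(\mathcal{A}+\mathcal{B}\right)$: there are choices of $(p_k,q_k)$ with $l_1<l_2$ for which every indicator in $\mathcal{B}$ vanishes, and then letting the contraction norms tend to zero kills the right-hand side faster than the left. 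So for the mixed alignments you must \emph{first} unwind the pairing by Fubini into cross contractions of $f_1$ against $f_2$ or $h_2$ --- which are partial, i.e.\ neither empty nor full, precisely because the alignment is mixed --- and only \emph{then} apply Cauchy--Schwarz, \eqref{communicative law} and \eqref{norm inequality} to land quadratically in $\mathcal{A}$. Your phrase ``genuine partial contraction reabsorbed via Lemma \ref{basic inequlity}'' suggests you intend this order of operations, but until that alignment-by-alignment reduction is written down the proof is incomplete; it is exactly the content that the paper imports wholesale by citing \cite[Lemma 3.1]{ch17}.
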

	\begin{proof}
		 \cite[Lemma 3.1]{ch17} implies that 
		\begin{equation}\label{express 4moment}
		\begin{aligned}
		&\mathrm{Cov}(\abs{F_1}^2,\, \abs{F_2}^2)-\abs{\E[F_1\overline{F_2}]}^2-\abs{\E[F_1F_2]}^2 
	\\=&\, \mathbb{1}_{\left\lbrace l\geq 2\right\rbrace }\sum_{r=1}^{l-1}\psi_r
		 + \mathbb{1}_{\left\lbrace l\geq 1, (p_1,q_1)\neq (p_2,q_2)\right\rbrace } \psi_l +\mathbb{1}_{\left\lbrace l'\geq 2\right\rbrace }\sum_{r=1}^{l'-1}\phi_r+ \mathbb{1}_{\left\lbrace l'\geq1, (p_1,q_1)\neq (q_2,p_2)\right\rbrace }\phi_{l'}\\:= &\,I_1+I_2+I_3+I_4, 
		\end{aligned} 
	\end{equation}
	where  $l= p_1\wedge p_2+ q_1\wedge q_2$, $l'=p_1\wedge q_2+ q_1\wedge p_2$, $m=l_1+l_2$ and 
		\begin{align*}
			\psi_r&=\sum_{i+j=r} {p_1\choose i}{q_1\choose j}{q_2\choose j}{p_2\choose i}  p_1!q_1! p_2!q_2!  \norm{f_1\otimes_{i,j}h_2}^2_{\FH^{\otimes(m-2r)}}, \mbox{ for } 1\leq r\leq l,\\
			\phi_r&=(p_1+p_2-r)!(q_1+q_2-r)!\norm{\sum_{i+j=r} {p_1\choose i}{q_1\choose j}{q_2\choose i }{p_2\choose j}  i!j!\, f_1\tilde{\otimes}_{i,j} f_2}^2_{\FH^{\otimes(m-2r)}}, \mbox{ for } 1\leq r\leq l'.
		\end{align*}
		Applying \eqref{norm inequality} and  \eqref{communicative law}, we have that
		\begin{equation}\label{first term inequality}
		\begin{aligned}
			I_1
			&\le c\sum_{0 <i+j< l} \norm{f_1\otimes_{p_1-i,q_1-j}h_1}^2_{\FH^{\otimes(2(i+j))}}+\norm{f_2\otimes_{p_2-i,q_2-j}h_2}^2_{\FH^{\otimes(2(i+j))}}  \\ &
			\le c	\mathcal{A}(f_1,h_1,f_2,h_2).
		\end{aligned}
	\end{equation}
		By the power means inequality, Minkowski's inequality and a similar argument as in the proof of \eqref{first term inequality}, we have that
		\begin{equation}\label{third term}
		\begin{aligned}
		I_3
			\le  c \sum_{0 <i+j< l'} \norm{f_1\otimes_{i,j}f_2}^2_{\FH^{\otimes(m-2(i+j))}}\le c	\mathcal{A}(f_1,h_1,f_2,h_2).
		\end{aligned}
	\end{equation}
		Applying \eqref{norm inequality}, \eqref{communicative law} and \eqref{before norm inequality}, we have that
		\begin{equation}\label{second term}
		\begin{aligned}
		I_2
			&\le c\Big[ \mathbb{1}_{\set{l\geq1, p_1>p_2,q_1<q_2}}(\norm{f_1\otimes_{p_1-p_2,0}h_1}^2_{\FH^{\otimes 2l}}+\norm{f_2\otimes_{0,q_2-q_1}h_2}^2_{\FH^{\otimes{2l} }})\\
			&\quad \quad+ \mathbb{1}_{\set{l\geq1,p_1<p_2,q_1>q_2}}(\norm{f_1\otimes_{0,q_1-q_2}h_1}^2_{\FH^{\otimes 2l}}+\norm{f_2\otimes_{p_2-p_1,0}h_2}^2_{\FH^{\otimes{2l} }})\\	&\quad\quad +
		\mathbb{1}_{\set{(p_2,\,q_2)\curlyeqsucc (p_1,\,q_1)}}\norm{f_1}^2_{\FH^{\otimes l_1}}\cdot\norm{f_2\otimes_{p_2-p_1,q_2-q_1}h_2}_{\FH^{\otimes{2l_1} }} \\
			&\quad\quad +\mathbb{1}_{\set{(p_1,\,q_1)\curlyeqsucc (p_2,\,q_2)}}\norm{f_2 }^2_{\FH^{\otimes l_2}}\cdot\norm{f_1\otimes_{p_1-p_2,q_1-q_2}h_1}_{\FH^{\otimes{2l_2} }} \Big]\\
			&\le c\left( 	\mathcal{A}(f_1,h_1,f_2,h_2)+	\mathcal{B}(f_1,h_1,f_2,h_2) \right).
		\end{aligned}
	\end{equation}
		Similarly, 
%		\begin{equation}\label{fourth term inequality}
%		\begin{aligned}
%			I_4
%			&\le c\mathbb{1}_{\left\lbrace (p_1,q_1)\neq (q_2,p_2)\right\rbrace }\norm{f_1\otimes_{p_1\wedge q_2,p_2\wedge q_1}f_2}_{\FH^{\otimes{2l_1} }}^2 \\
%			&\le c\Big[ \mathbb{1}_{\set{l'\geq1,p_1>q_2,q_1<p_2}}(\norm{f_1\otimes_{p_1-q_2,0}h_1}^2_{\FH^{\otimes 2l'}}+\norm{f_2\otimes_{ p_2-q_1,0}h_2}^2_{\FH^{\otimes{2l'} }})\\
%		&\quad \quad+ \mathbb{1}_{\set{l'\geq1,p_1<q_2,q_1>p_2}}(\norm{f_1\otimes_{0,q_1-p_2}h_1}^2_{\FH^{\otimes 2l'}}+\norm{f_2\otimes_{0,q_2-p_1}h_2}^2_{\FH^{\otimes{2l'} }})\\	&\quad \quad+
%		 \mathbb{1}_{\set{(q_2,\,p_2)\curlyeqsucc (p_1,\,q_1)}}\norm{f_1}^2_{\FH^{\otimes l_1}}\cdot\norm{f_2\otimes_{p_2-q_1,q_2-p_1}h_2}_{\FH^{\otimes{2l_1} }} \\
%		&\quad \quad+\mathbb{1}_{\set{(p_1,\,q_1)\curlyeqsucc (q_2,\,p_2)}}\norm{f_2 }^2_{\FH^{\otimes l_2}}\cdot\norm{f_1\otimes_{p_1-q_2,q_1-p_2}h_1}_{\FH^{\otimes{2l_2} }} \Big]\\
%			&\le c\left( 	\mathcal{A}(f_1,h_1,f_2,h_2)+	\mathcal{B}(f_1,h_1,f_2,h_2) \right).
%		\end{aligned}
%	\end{equation}
		\begin{equation}\label{fourth term inequality}
			\begin{aligned}
					I_4
					\le c\mathbb{1}_{\left\lbrace (p_1,q_1)\neq (q_2,p_2)\right\rbrace }\norm{f_1\otimes_{p_1\wedge q_2,p_2\wedge q_1}f_2}_{\FH^{\otimes{2l_1} }}^2 
				\le c\left( \mathcal{A}(f_1,h_1,f_2,h_2)+	\mathcal{B}(f_1,h_1,f_2,h_2) \right).
				\end{aligned}
		\end{equation}
		Substituting the inequalities \eqref{first term inequality}-\eqref{fourth term inequality} into the identity \eqref{express 4moment}, we obtain the desired estimate \eqref{key estimate}.
	\end{proof}

	\begin{proposition}\label{be bound prop 63}
		If $F=I_{p,q}(f)$ with $f\in  \FH^{\odot p}\otimes \FH^{\odot q}$ , %such that $\E[F^2]=0$, 
		then we have that
		\begin{equation}\label{contraction moment}
			c_1(p,q)  \mathcal{A}(f,h)\le \E[\abs{F}^4]-2 \big(\E[\abs{F}^2]\big)^2  -\abs{E[{F}^2]}^2\le c_2(p,q)    \mathcal{A}(f,h),
	\end{equation}
		where $h$ is the reverse complex conjugate of $f$.
	\end{proposition}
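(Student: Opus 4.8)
My plan is to derive both inequalities by specializing the analysis behind Proposition \ref{jproo key estimate} and the identity \eqref{express 4moment} to the diagonal case $F_1=F_2=F$, that is $f_1=f_2=f$, $h_1=h_2=h$, $p_1=p_2=p$ and $q_1=q_2=q$. With these choices one has $\mathrm{Cov}(\abs{F}^2,\abs{F}^2)=\E[\abs{F}^4]-(\E[\abs{F}^2])^2$, while $\abs{\E[F\overline{F}]}^2=(\E[\abs{F}^2])^2$ (since $\E[\abs{F}^2]$ is real) and $\abs{\E[FF]}^2=\abs{\E[F^2]}^2$. Hence the middle quantity of \eqref{contraction moment} is exactly $\mathrm{Cov}(\abs{F}^2,\abs{F}^2)-\abs{\E[F\overline{F}]}^2-\abs{\E[FF]}^2$, and moreover $\mathcal{A}(f,h,f,h)=\mathcal{A}(f,h)=2\sum_{0<i+j<l}\norm{f\otimes_{i,j}h}^2_{\FH^{\otimes(2(l-i-j))}}$ with $l=p+q$.

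For the upper bound I would invoke Proposition \ref{jproo key estimate} directly; the only point to verify is that $\mathcal{B}(f,h,f,h)=0$. Indeed, each of the four indicators defining $\mathcal{B}$ reduces here to either $(p,q)\curlyeqsucc(p,q)$ or $(p,q)\curlyeqsucc(q,p)$. The first is impossible because the (strict) relation $\curlyeqsucc$ requires the two pairs to be distinct; the second forces $p\ge q$ and $q\ge p$, hence $p=q$, which again contradicts $(p,q)\neq(q,p)$. Thus all four indicators vanish, and Proposition \ref{jproo key estimate} immediately yields $\E[\abs{F}^4]-2(\E[\abs{F}^2])^2-\abs{\E[F^2]}^2\le c_2(p,q)\,\mathcal{A}(f,h)$.

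Since Proposition \ref{jproo key estimate} only controls the covariance quantity from above, the lower bound requires returning to the exact expansion \eqref{express 4moment} with $F_1=F_2=F$, where now $l=p+q$ and $l'=2(p\wedge q)$. The term $I_2$ carries the indicator $\mathbb{1}_{\{(p,q)\neq(p,q)\}}=0$ and hence vanishes, leaving the middle quantity equal to $I_1+I_3+I_4$. Each $\phi_r$ is a positive factorial multiple of a squared norm, so $I_3,I_4\ge0$ and may simply be discarded. It then suffices to bound $I_1=\sum_{r=1}^{l-1}\psi_r$ from below, where $\psi_r=\sum_{i+j=r}\binom{p}{i}^2\binom{q}{j}^2(p!)^2(q!)^2\norm{f\otimes_{i,j}h}^2_{\FH^{\otimes(2(l-i-j))}}$ ranges over precisely the index set $\{0<i+j<l,\ 0\le i\le p,\ 0\le j\le q\}$ that defines $\mathcal{A}(f,h)$. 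Because $\binom{p}{i}^2\binom{q}{j}^2\ge1$ on this set, we obtain $I_1\ge (p!q!)^2\sum_{0<i+j<l}\norm{f\otimes_{i,j}h}^2_{\FH^{\otimes(2(l-i-j))}}=\tfrac12(p!q!)^2\mathcal{A}(f,h)$, which is the claimed lower bound with $c_1(p,q)=\tfrac12(p!q!)^2$.

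The one genuinely delicate point is the diagonal bookkeeping: confirming that the ``off-diagonal'' contributions $I_2$ and $\mathcal{B}$ disappear when $F_1=F_2$. This is exactly what removes the contractions $f\otimes_{i,j}f$ (which are buried in $I_3$ and $I_4$) from the two-sided estimate, so that the final bound is governed purely by the contractions $f\otimes_{i,j}h$ collected in $\mathcal{A}(f,h)$. The remaining comparison between the positive coefficients appearing in $\psi_r$ and the summands of $\mathcal{A}(f,h)$ is routine.
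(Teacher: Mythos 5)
Your proof is correct and follows essentially the same route as the paper: the paper likewise specializes the identity \eqref{express 4moment} to $F_1=F_2=F$ (yielding \eqref{revised version1}), discards the nonnegative $\varphi_r$ terms and bounds the coefficients of the $f\otimes_{i,j}h$ contractions from below for the lower bound, and invokes Proposition \ref{jproo key estimate} for the upper bound. Your explicit verification that $\mathcal{B}(f,h,f,h)=0$ (via the strictness of $\curlyeqsucc$) and that $I_2$ vanishes on the diagonal is left implicit in the paper, but it is exactly the check the paper's one-line conclusion relies on.
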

	%\begin{remark}\blue{It is a key estimate and we need a short proof.} \end{remark}
	\begin{proof}
		Taking $F_1=F_2=F$ in \eqref{express 4moment}, we obtain that 
		\begin{equation}\label{revised version1}
			\quad\E[\abs{F}^4]-2 \big(\E[\abs{F}^2]\big)^2 -\abs{\E[{F}^2]}^2=\mathbb{1}_{\left\lbrace l\geq 2\right\rbrace }\sum_{r=1}^{l-1}\vartheta_r +\mathbb{1}_{\left\lbrace l'\geq 2\right\rbrace }\sum_{r=1}^{l'-1}\varphi_r +	\mathbb{1}_{\left\lbrace l'\geq1, p\neq q\right\rbrace }\varphi_{l'},
	\end{equation}
		where $l=p+q$, $l^{'}=2(p\wedge q)$ and 
		\begin{align*}
		\vartheta_r	&=\sum_{i+j=r}{p\choose i}^2{q\choose j}^2 (p!q!)^2  \norm{f\otimes_{i,j}h}^2_{\FH^{\otimes(2(l-r))}}, \mbox{ for }  1\leq r\leq l,\\
			\varphi_r&=(2p-r)!(2q-r)!\norm{\sum_{i+j=r}\binom{p}{i}\binom{q}{i}\binom{q}{j}\binom{p}{j}i!j!f\tilde{\otimes}_{i,j} f}^2_{\FH^{\otimes(2(l-r))}},\mbox{ for }  1\leq r\leq l'.
		\end{align*}
		Taking $c_1(p,q)=\min\Big\{\binom{p}{i}^2\binom{q}{j}^2 (p!q!)^2: 0<i+j<l\Big\}$ and applying Proposition \ref{jproo key estimate},  we get the conclusion.
	\end{proof}
	\begin{remark}
		Analogue to \eqref{revised version1}, one can show that 
		\begin{equation}\label{revised version2}
				\E[\abs{F}^4]-2 \big(\E[\abs{F}^2]\big)^2 -\abs{\E[{F}^2]}^2=\mathbb{1}_{\left\lbrace l\geq 2\right\rbrace }\sum_{r=1}^{l-1}	\varrho_r+\mathbb{1}_{\left\lbrace l'\geq 2\right\rbrace }\sum_{r=1}^{l'-1} \varsigma_r+	\mathbb{1}_{\left\lbrace l'\geq 1, p\neq q\right\rbrace }\varsigma_{l'},
		\end{equation}
		where $l=p+q$, $l^{'}=2(p\wedge q)$ and
		\begin{align*}
			\varrho_r&=((l-r)!)^2\norm{\sum_{i+j=r}\binom{p}{i}^2\binom{q}{j}^2i!j!f\tilde{\otimes}_{i,j} h}^2_{\FH^{\otimes(2(l-r))}}, \mbox{ for }  1\leq r\leq l,
			\\	\varsigma_r&=\sum_{i+j=r}{p\choose i}{q\choose i}{q\choose j}{p\choose j} (p!q!)^2  \norm{f\otimes_{i,j}f}^2_{\FH^{\otimes(2(l-i-j))}},\mbox{ for }  1\leq r\leq l'.
		\end{align*}
		 Here we both add a new third term in \cite[Equation 3.10, Equation 3.11]{cl19} and revise them as \eqref{revised version1}, \eqref{revised version2}. Note that this minor error of \cite[Equation 3.10, Equation 3.11]{cl19} dose not affect the main results in \cite{cl19}, since by \eqref{norm inequality 0}, if $p\neq q$, 
	$$\norm{f\otimes_{p\wedge q,p\wedge q}f}^2_{\FH^{\otimes(2(l-l'))}}\leq  \norm{f\otimes_{p-p\wedge q,q-p\wedge q}h}^2_{\FH^{\otimes 2l'}}\leq \sum_{0<i+j<l}\norm{f {\otimes}_{i,j}h}^2_{\FH^{\otimes(2(l-i-j))}}.$$
	\end{remark}

	\begin{lem}\label{3moment}
		If $F=I_{p,q}(f)$ with $f\in  \FH^{\odot p}\otimes \FH^{\odot q}$, then we have that
		\begin{equation}\label{3moment_1}
			\E\left[ F^3\right]=\mathbb{1}_{\left\lbrace p=q \right\rbrace }\sum_{i=0}^{p} i!(p-i)!(p!)^2\binom{p}{i}^4\left\langle f\tilde{\otimes}_{i,p-i}f,h \right\rangle _{\FH^{\otimes (2p)}},
		\end{equation}
		where $h$ is the reverse complex conjugate of $f$, and 
		\begin{equation}\label{3moment_2}
			\E\left[ F^2\overline{F}\right]=\mathbb{1}_{\left\lbrace p=q \right\rbrace }\sum_{i=0}^{p} i!(p-i)!(p!)^2\binom{p}{i}^4\left\langle f\tilde{\otimes}_{i,p-i}f,f \right\rangle _{\FH^{\otimes (2p)}}.
		\end{equation}
	\end{lem}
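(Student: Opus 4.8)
The plan is to expand $F^2$ by the product formula \eqref{Product_formula}, and then pair the resulting chaos expansion against $F$ (for $\E[F^3]$) or against $\overline{F}$ (for $\E[F^2\overline F]$) using the isometry property \eqref{isometry property}, combined with the reverse-complex-conjugate identity $\overline{I_{p,q}(f)}=I_{q,p}(h)$. Orthogonality of distinct chaoses is exactly what will force the indicator $\mathbb{1}_{\set{p=q}}$. Concretely, applying \eqref{Product_formula} with $a=c=p$, $b=d=q$ gives
\[
F^2=\sum_{i=0}^{p\wedge q}\sum_{j=0}^{p\wedge q}\binom{p}{i}\binom{q}{i}\binom{q}{j}\binom{p}{j}\,i!\,j!\,I_{2p-i-j,\,2q-i-j}\!\left(f\otimes_{i,j}f\right),
\]
so that the $(i,j)$-th summand lives in the chaos $\mathscr{H}_{2p-i-j,\,2q-i-j}(Z)$.

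For \eqref{3moment_1} I would write the third factor as $F=\overline{I_{q,p}(h)}$, so that $\E[F^3]=\E\big[F^2\,\overline{I_{q,p}(h)}\big]$, and then apply \eqref{isometry property} to each summand. Only the terms whose chaos orders match survive, i.e. those with $2p-i-j=q$ and $2q-i-j=p$; subtracting these two equations yields $3(p-q)=0$, hence $p=q$, and then $i+j=p$, i.e. $j=p-i$. Since $I_{2p-i-j,2q-i-j}$ symmetrizes its kernel, the inner product produced by the isometry is precisely $\langle f\tilde{\otimes}_{i,p-i}f,\,h\rangle_{\FH^{\otimes(2p)}}$, and the isometry contributes the factor $q!\,p!=(p!)^2$. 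Specializing the product-formula coefficient to $p=q$ and $j=p-i$ collapses $\binom{p}{i}\binom{q}{i}\binom{q}{j}\binom{p}{j}\,i!\,j!$ into $\binom{p}{i}^4\,i!\,(p-i)!$, which after multiplication by $(p!)^2$ gives exactly the summand in \eqref{3moment_1}.

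The computation of \eqref{3moment_2} is the same, except that one pairs $F^2$ directly against $\overline{F}=\overline{I_{p,q}(f)}$, with no need for the conjugate identity. Here \eqref{isometry property} selects the terms with $2p-i-j=p$ and $2q-i-j=q$, again forcing $p=q$ and $j=p-i$, the surviving factor being $p!\,q!=(p!)^2$ and the inner product $\langle f\tilde{\otimes}_{i,p-i}f,\,f\rangle_{\FH^{\otimes(2p)}}$. I expect no genuine obstacle here: the argument is a direct application of \eqref{Product_formula} and \eqref{isometry property}, and the indicator $\mathbb{1}_{\set{p=q}}$ falls out purely from chaos orthogonality. The only point demanding care is the bookkeeping collapse of the four binomial coefficients into $\binom{p}{i}^4$ under the substitutions $p=q$ and $j=p-i$, together with tracking that the integral's built-in symmetrization is what turns $f\otimes_{i,p-i}f$ into $f\tilde{\otimes}_{i,p-i}f$ inside the inner product.
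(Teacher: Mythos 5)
Your proposal is correct and follows essentially the same route as the paper's proof: expand $F^2$ by the product formula \eqref{Product_formula}, pair against $F=\overline{I_{q,p}(h)}$ (resp.\ $\overline{F}=\overline{I_{p,q}(f)}$) via the isometry \eqref{isometry property}, and let chaos orthogonality force $p=q$, $j=p-i$, with the coefficient collapse $\binom{p}{i}\binom{q}{i}\binom{q}{j}\binom{p}{j}i!j!\,(p!)^2=i!(p-i)!(p!)^2\binom{p}{i}^4$ exactly as in the paper. Your attention to the symmetrization point (that $I_{a,b}$ of a non-symmetric kernel yields $f\tilde{\otimes}_{i,p-i}f$ in the inner product) is also consistent with the paper's treatment.
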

	
	\begin{proof}
		Applying product formula \eqref{Product_formula}, we have that
		\begin{equation*}
			F^2=\sum_{i=0}^{p\wedge q}\sum_{j=0}^{p\wedge q}i!j!\binom{p}{i}\binom{q}{i}\binom{p}{j}\binom{q}{j}I_{2p-(i+j),2q-(i+j)}\left( f\tilde{\otimes}_{i,j}f \right) .
		\end{equation*}
		By isometry property \eqref{isometry property}, we obtain that 
		\begin{align*}
			\E\left[ F^3\right]&=\E\left[ F^2\overline{\overline{F}}\right]=\sum_{i=0}^{p\wedge q}\sum_{j=0}^{p\wedge q}i!j!\binom{p}{i}\binom{q}{i}\binom{p}{j}\binom{q}{j}\E\left[ I_{2p-(i+j),2q-(i+j)}\left( f\tilde{\otimes}_{i,j}f \right) \overline{{I}_{q,p}(h)}\right] \\
			&=\sum_{i=0}^{p\wedge q}\sum_{j=0}^{p\wedge q}i!j!\binom{p}{i}\binom{q}{i}\binom{p}{j}\binom{q}{j}\mathbb{1}_{\left\lbrace 2p-(i+j)=q\right\rbrace }\mathbb{1}_{\left\lbrace 2q-(i+j)=p\right\rbrace } p!q!\left\langle  f\tilde{\otimes}_{i,j}f ,h\right\rangle _{\FH^{\otimes (p+q)}}\\
			&=\mathbb{1}_{\left\lbrace p=q \right\rbrace }\sum_{i=0}^{p} i!(p-i)!(p!)^2\binom{p}{i}^4\left\langle f\tilde{\otimes}_{i,p-i}f,h \right\rangle _{\FH^{\otimes (2p)}}.
		\end{align*}
		Using the similar argument, we can get \eqref{3moment_2}. Then we finish the proof.
	\end{proof}

	\noindent{\it Proof of Theorem~\ref{cFMBE bound}.\,}
	Denote $F=A+\mi B$. Then by \cite[Theorem 3.3]{cl17}, both $A$ and $B$ are real $l$-th Wiener-It\^{o} integrals. It follows from \cite[Theorem 6.2.2]{NourPecc12}) that 
	\begin{align}\label{bds 00}
		d_{W}(F, N)\le  4\sqrt{ \sum_{r=1}^{l-1} {2r \choose r}}  \frac{ \sqrt{\lambda_1}}{\lambda_2} (\abs{x_1}^{\frac12}+\abs{x_2}^{\frac12}),
	\end{align} where $x_1=\E[A^4]-3(\E[A^2])^2,\,x_2=\E[B^4]-3(\E[B^2])^2$. Note that
	\begin{equation}\label{bushengshi 1}
	\begin{aligned}
		x_1+x_2&= \E[A^4]-3(\E[A^2])^2+ \E[B^4]-3(\E[B^2])^2  \\
		&=\E[\abs{F}^4]-2 \big(\E[\abs{F}^2]\big)^2- \big|\E[F^2] \big|^2-2\left(\E\left[A^{2}B^{2} \right]-\E\left[A^{2}\right] \E\left[ B^{2}\right]-2\left(\E\left[ AB\right]  \right) ^2  \right) \\
		&\le \E[\abs{F}^4]-2 \big(\E[\abs{F}^2]\big)^2- \big| \E[F^2] \big|^2,
	\end{aligned}
\end{equation}
	where in the last line we use the fact (see \cite[Lemma 4.8]{cl17})
	\begin{equation*}
		2\left(\E\left[A^{2}B^{2} \right]-\E\left[A^{2}\right] \E\left[ B^{2}\right]-2\left(\E\left[ AB\right]  \right) ^2\right) \geq0,
	\end{equation*}
	which can be proved by combining product formula \eqref{Product_formula}, isometry property \eqref{isometry property} and some combinatorics. Since $x_1,\,x_2\ge 0$ (see \cite[Corollary 5.2.11]{NourPecc12}), we have that 
	\begin{align}\label{bds 01}
		\abs{x_1}^{\frac12}+\abs{x_2}^{\frac12}\le \sqrt{2(x_1+x_2)}.
	\end{align} 
	Substituting \eqref{bushengshi 1} and \eqref{bds 01} into \eqref{bds 00}, we obtain \eqref{upper bound 1}. Combining with Proposition \ref{be bound prop 63}, we get  \eqref{upper bound 2}.
	
		Denote $N=N_1+\mi N_2$. By \cite[Theorem 4.1]{c23}, we further obtain that 
	\begin{align*}
		&d_{W}(F, N)\geq \sup\left\lbrace\left|\E\left[ g(A,B) \right] - \E\left[ g(N_1, N_2) \right]   \right|  \right\rbrace\\
		\geq& c_1(a,b,\sigma)\max\left\lbrace \left| \E\left[F^3 \right] \right|, \left|\E\left[F^2\overline{F} \right] \right|, \E\left[\left| F \right| ^4 \right]-2\left(\E\left[ \left| F \right| ^2\right]  \right) ^2-\left| \E\left[ F^2\right] \right| ^2 \right\rbrace,
	\end{align*}
	where $g:\mathbb{R}^2\rightarrow \mathbb{R}$ runs over the class of all four-times continuously differentiable functions such that $g$ and all of its derivatives of order up to four are bounded by one. Combining Proposition \ref{be bound prop 63} and Lemma \ref{3moment}, we obtain the lower bound \eqref{lower bound 2}.
	%--------------------------------------------------------------------------------------------------------
	{\hfill\large{$\Box$}}\\
	%--------------------------------------------------------------------------------------------------------
	
	%-------------------------------------------------------------------------------------------------------
	\section{Application}\label{Application}

As an application, we consider a complex-valued Ornstein-Uhlenbeck process defined by the stochastic differential equation 
\begin{equation}\label{SDE}
	\mathrm{d}Z_t=-\gamma Z_t\mathrm{d}t+ \mathrm{d}\zeta_t, \quad t\geq0,
\end{equation}
where $Z_0=0$, $\gamma \in\mathbb{C}$ is unknown, and $\zeta_t$ is a complex Brownian motion. That is, $\zeta_t =\left( B_t^1+\mathrm{i} B_t^2\right)/ \sqrt2$, where
$(B_t^1, B_t^2)_{t\geq0}$ is a two-dimensional standard Brownian motion. Suppose that only one trajectory $\left( Z_t\right)_{0\leq t\leq T} $ for $T> 0$ can be observed. Motivated by the work of \cite{HuNualart2010}, Chen, Hu and Wang in \cite{chw17} considered a least squares estimator of $\gamma$ defined as follows by minimizing $\int_{0}^{T}\left|\dot{Z}_{t}+\gamma Z_{t}\right|^{2} \mathrm{d} t$,
\begin{equation*}
	\hat{\gamma}_{T}=-\frac{\int_{0}^{T} \overline{Z_{t}} \mathrm{d} Z_{t}}{\int_{0}^{T}\left|Z_{t}\right|^{2} \mathrm{d} t}=\gamma- \frac{\int_{0}^{T} \overline{Z_{t}} \mathrm{d} \zeta_{t}}{\int_{0}^{T}\left|Z_{t}\right|^{2} \mathrm{d} t}.
\end{equation*}
They proved that $\sqrt{T}\left(\hat{\gamma}_{T}-\gamma\right)$ is asymptotically normal. Namely, as $ T \rightarrow \infty$,
\begin{equation*}
	\sqrt{T}\left[\hat{\gamma}_{T}-\gamma\right]=-\frac{\frac{1}{\sqrt{T}}\int_{0}^{T} \overline{Z_{t}} \mathrm{d} \zeta_{t}}{\frac{1}{T}\int_{0}^{T}\left|Z_{t}\right|^{2} \mathrm{d} t} \overset{d}{\rightarrow} \mathcal{N}_2\left(0, \lambda\mathrm{Id}_2\right),
\end{equation*}
where $\lambda> 0$ is the real part of $\gamma$ and $\mathrm{Id}_2$ denotes the $2\times2$ identity matrix. They showed that the denominator satisfies
\begin{equation*}
	\frac{1}{T}\int_{0}^{T}\left|Z_{t}\right|^{2} \mathrm{d} t\overset{a.s.}{\rightarrow}\frac{1}{2\lambda},
\end{equation*} 
and for the numerator $F_T:=\frac{1}{\sqrt{T}}\int_{0}^{T} \overline{Z_{t}} \mathrm{d} \zeta_{t}=F_{T,1}+\mathrm{i}F_{T,2}$,
\begin{equation*}\label{example}
	\left( F_{T,1},F_{T,2}\right)  \overset{d}{\rightarrow}\mathcal{N}_2\left(0, \frac{1}{4\lambda}\mathrm{Id}_2\right).
\end{equation*} 
Then the asymptotic normality of the estimator $\hat{\gamma}_{T}$ is obtained. One should note that, in \cite{chw17}, the noise considered by Chen, Hu and Wang is a complex fractional Brownian motion with a Hurst parameter belonging to $\left[ 1/2,3/4\right) $. This case involves more complicated calculations and more precise estimations. Here, in order to derive the optimal Berry-Ess\'een bound of $F_T$ (see Theorem \ref{example_BM}), we consider the special case $H=1/2$, namely the case in which the noise is a complex standard Brownian motion.

Define the Hilbert space $\mathfrak{H}=L_{\mathbb{C}}^2\left( \left[ 0,+\infty\right) \right) $ with inner product $\left\langle  f,g\right\rangle_{\mathfrak{H}}= \int_{0}^{\infty}  f(t) \overline{g(t)} \mathrm{d} t$ for any $f,g\in\mathfrak{H}$. Given $f\in\mathfrak{H}^{\odot a}\otimes\mathfrak{H}^{\odot b}$, $g\in\mathfrak{H}^{\odot c}\otimes\mathfrak{H}^{\odot d}$, for $i=0,\dots,a\land d$, $j=0,\dots,b\land c$, the $(i,j)$-th contraction of $f$ and $g$ is the element of $\mathfrak{H}^{\odot (a+c-i-j)}\otimes\mathfrak{H}^{\odot (b+d-i-j)}$ defined by
\begin{align*}
	&f \otimes_{i, j} g\left(t_{1}, \ldots, t_{a+c-i-j} ; s_{1}, \ldots, s_{b+d-i-j}\right) \\
	=&\,\int_{\left[ 0,+\infty\right)^{2 (i+j)}} f\left(t_{1}, \ldots, t_{a-i}, u_{1}, \ldots, u_{i} ; s_{1} \ldots, s_{b-j}, v_{1} ,\ldots, v_{j}\right) \\
	&\qquad \quad g\left(t_{a-i+1}, \ldots, t_{a+c-i-j}, v_{1}, \ldots, v_{j}; s_{b-j+1}, \ldots, s_{b+d-i-j}, u_{1}, \ldots, u_{i}\right) \mathrm{d} \vec{u} \mathrm{d} \vec{v} ,
\end{align*}
where $\vec{u}=\left(u_{1}, \ldots, u_{i}\right)$ and $\vec{v}=\left(v_{1}, \ldots, v_{j}\right)$.

According to \eqref{SDE}, let 
\begin{equation*}
	\psi_{T}(t;s)= \frac{1}{\sqrt{T}} e^{-\overline{\gamma}(t-s)}\mathbb{1}_{\left\lbrace 0\leq s\leq t\leq T\right\rbrace}, \quad 	h_{T}(t;s)=\overline{\psi_{T}(s;t)}=\frac{1}{\sqrt{T}} e^{-\gamma(s-t)}\mathbb{1}_{\left\lbrace 0\leq t\leq s\leq T\right\rbrace},
\end{equation*}
we know that
\begin{align}
	F_T&=\frac{1}{\sqrt{T}}\int_{0}^{T} \overline{Z_{t}} \mathrm{d} \zeta_{t}=\frac{1}{\sqrt{T}}\int_{0}^{T} \int_{0}^{T}e^{-\overline{\gamma}(t-s)}\mathbb{1}_{\left\lbrace 0\leq s\leq t\leq T\right\rbrace} \mathrm{d} \zeta_{t}\mathrm{d}\overline{\zeta_s}=I_{1,1}(\psi_{T}(t;s)),\label{F_T}\\
	\overline{F_T}&=I_{1,1}(h_{T}(t;s)).\nonumber
\end{align}
By the isometry property of complex multiple Wiener-It\^o integral \eqref{isometry property}, we obtain that
\begin{equation*}
	\E\left[ F_T^2\right]  =\left\langle \psi_{T},h_T \right\rangle _{\mathfrak{H}^{\otimes 2}_{\mathbb{C}}}=\int_{0}^{\infty}\int_{0}^{\infty} \psi_{T}\left(t;s \right)  \overline{h_{T}\left( t;s\right) }\mathrm{d}t\mathrm{d}s=0,
\end{equation*}
and as $T\rightarrow\infty$,
\begin{equation*}
	\begin{aligned}
		\E\left[ |F_T|^2\right] & =\left\langle \psi_{T},\psi_{T} \right\rangle _{\mathfrak{H}^{\otimes 2}_{\mathbb{C}}}
		=\frac{1}{T}\int_{0}^{T}\int_{0}^{t}e^{-2\lambda(t-s)}\mathrm{d}s\mathrm{d}t=\frac{1}{2\lambda}+\frac{1}{4\lambda^2T}e^{-2\lambda T}-\frac{1}{4\lambda^2T}
		\rightarrow\frac{1}{2\lambda}.  
	\end{aligned}
\end{equation*}
Since $\lim\limits_{T\rightarrow \infty}\left( 1+\frac{1}{2\lambda T}e^{-2\lambda T}-\frac{1}{2\lambda T}\right)=1 $, for $T$ large enough, $1+\frac{1}{2\lambda T}e^{-2\lambda T}-\frac{1}{2\lambda T}>0$. Let
\begin{equation}\label{normalized F_T}
	F_T^{'}=\left(1+\frac{1}{2\lambda T}e^{-2\lambda T}-\frac{1}{2\lambda T} \right) ^{-\frac{1}{2}}F_T,
\end{equation}
then $\E[ (F_T^{'})^2] =0$ and	$\E[ |F_T^{'}|^2] =1/(2\lambda)$. Now we consider the optimal Berry-Ess\'een bound of $F_T^{'}$ converging in distribution to $Z\sim \mathcal{CN}(0,1/(2\lambda))$ under the Wasserstein distance as $T\rightarrow\infty$.

According to \cite[Lemma 4.6, Lemma 4.7]{c23} and Proposition \ref{be bound prop 63}, we have that 
\begin{align}
	\left| \E\left[F_T^3 \right] \right|&=\left| \left\langle f\tilde{\otimes}_{0,1}f,h \right\rangle _{\FH^{\otimes 2}}+\left\langle f\tilde{\otimes}_{1,0}f,h \right\rangle _{\FH^{\otimes 2}}\right|=0, \label{example_3moment1}\\
	\left|\E\left[F_T^2\overline{F_T} \right] \right| &=\left|  \left\langle f\tilde{\otimes}_{0,1}f,f \right\rangle _{\FH^{\otimes 2}}+\left\langle f\tilde{\otimes}_{1,0}f,f \right\rangle _{\FH^{\otimes 2}}\right| \asymp \frac{1}{\sqrt{T}},\label{example_3moment2}\\
	\E\left[\left| F_T \right| ^4 \right]-&2\left(\E\left[ \left| F_T \right| ^2\right]  \right) ^2-\left| \E\left[ F_T^2\right] \right| ^2\asymp \norm{f {\otimes}_{0,1}h}^2_{\FH^{\otimes2}}+ \norm{f {\otimes}_{1,0}h}^2_{\FH^{\otimes2}}\asymp\frac{1}{T}.\label{example_4moment}
\end{align}
Combining Theorem \ref{cFMBE bound}, \eqref{example_3moment1}, \eqref{example_3moment2} and \eqref{example_4moment}, we obtain the following theorem.
\begin{theorem}\label{example_BM}
	$F_T^{'}$ defined as \eqref{normalized F_T} converges in distribution to $Z\sim \mathcal{CN}(0,1/(2\lambda))$ and there exist two constants $0<c_1<c_{2}<\infty$ only depending on $\lambda$ such that for $T$ large enough,
	\begin{equation*}
		c_1 \frac{1}{\sqrt{T}} \leq d_{W}\left(F_T^{'}, Z\right) \leq c_{2} \frac{1}{\sqrt{T}}.
	\end{equation*}
\end{theorem}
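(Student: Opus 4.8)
The plan is to read the conclusion directly off Theorem \ref{cFMBE bound}, specialized to the case $p=q=1$ (so that $l=p+q=2$), combined with the three moment asymptotics \eqref{example_3moment1}, \eqref{example_3moment2}, \eqref{example_4moment}. First I would record the parameters of $F_T'$: by construction \eqref{normalized F_T} one has $\E[(F_T')^2]=0$ and $\E[|F_T'|^2]=1/(2\lambda)$, so in the notation of Theorem \ref{cFMBE bound} we have $\sigma^2=1/(2\lambda)$ and $a=b=0$. The hypothesis $\sigma^2>\sqrt{a^2+b^2}$ is then just $1/(2\lambda)>0$, which holds since $\lambda>0$. The target covariance becomes $C=\frac{1}{4\lambda}\mathrm{Id}_2$ with coincident eigenvalues $\lambda_1=\lambda_2=1/(4\lambda)$, and the associated $N\sim\mathcal{N}_2(0,C)$ is exactly the real representation of $Z\sim\mathcal{CN}(0,1/(2\lambda))$, so the limit law matches the statement.

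For the upper bound I would apply \eqref{upper bound 1}. Since $l=2$, the sum $\sum_{r=1}^{l-1}\binom{2r}{r}$ collapses to the single term $\binom{2}{1}=2$, and $\sqrt{\lambda_1}/\lambda_2=2\sqrt{\lambda}$ is a finite positive constant depending only on $\lambda$; thus \eqref{upper bound 1} gives $d_{W}(F_T',N)\le c(\lambda)\sqrt{\E[|F_T'|^4]-2(\E[|F_T'|^2])^2-|\E[(F_T')^2]|^2}$. Writing $\kappa_T:=1+\frac{1}{2\lambda T}e^{-2\lambda T}-\frac{1}{2\lambda T}$ so that $F_T'=\kappa_T^{-1/2}F_T$ with $\kappa_T\to 1$, the fourth-moment quantity for $F_T'$ equals $\kappa_T^{-2}$ times that of $F_T$, and by \eqref{example_4moment} the latter is of order $1/T$; taking the square root yields $d_{W}(F_T',N)\le c_2/\sqrt{T}$. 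This already forces $F_T'\overset{d}{\rightarrow}Z$, since $d_W$ dominates the metric of weak convergence.

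For the lower bound I would invoke \eqref{lower bound 1}, which bounds $d_{W}(F_T',N)$ from below by $c_1(a,b,\sigma)$ times the maximum of $|\E[(F_T')^3]|$, $|\E[(F_T')^2\overline{F_T'}]|$, and the same fourth-moment quantity. Rescaling by $\kappa_T$, the estimates \eqref{example_3moment1}--\eqref{example_4moment} transfer to $F_T'$: the third moment $|\E[(F_T')^3]|$ vanishes, $|\E[(F_T')^2\overline{F_T'}]|\asymp 1/\sqrt{T}$, and the fourth-moment term is $\asymp 1/T$. For $T$ large the maximum is therefore attained by the term of order $1/\sqrt{T}$, giving $d_{W}(F_T',N)\ge c_1/\sqrt{T}$.

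The key structural point, and the reason the bounds are optimal, is that the upper bound is the square root of a quantity of order $1/T$, hence of order $1/\sqrt{T}$, whereas the lower bound is governed not by that fourth-moment quantity but by the third absolute moment $|\E[(F_T')^2\overline{F_T'}]|$, which is itself of order $1/\sqrt{T}$; the two rates coincide. This is precisely the scenario of Remark \ref{optimal bound} in the case $p=q$. The only genuinely delicate inputs are the asymptotics \eqref{example_3moment2} and \eqref{example_4moment}, but these are supplied by \cite[Lemma 4.6, Lemma 4.7]{c23} together with Proposition \ref{be bound prop 63}; the remaining step, namely checking that $\kappa_T\to 1$ preserves all these orders, is routine, so no substantial obstacle remains once Theorem \ref{cFMBE bound} and the moment estimates are in hand.
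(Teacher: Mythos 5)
Your proposal is correct and takes essentially the same route as the paper: the paper obtains Theorem \ref{example_BM} precisely by combining Theorem \ref{cFMBE bound} (upper bound \eqref{upper bound 1}, lower bound \eqref{lower bound 1}) with the moment asymptotics \eqref{example_3moment1}, \eqref{example_3moment2} and \eqref{example_4moment}, the third-moment term $\left|\E\left[F_T^2\overline{F_T}\right]\right|\asymp 1/\sqrt{T}$ driving the lower bound and the fourth-moment quantity of order $1/T$ driving the upper bound. Your explicit handling of the normalization factor $\kappa_T\to 1$ and of the eigenvalues $\lambda_1=\lambda_2=1/(4\lambda)$ only spells out details the paper leaves implicit.
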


\begin{remark}\label{compare to Campese 2}
As explained in Remark \ref{compare to Campese 1}, applying the upper bound provided by \cite[Theorem 4.6]{Camp15} or \eqref{upper bound in campese15}, we can only get that 
\begin{equation*}
d_{W}\left(F_T^{'}, Z\right) \leq c_{2} \frac{1}{T^{1/4}},
\end{equation*}
which is far greater than the optimal Berry-Ess\'een bound $1/\sqrt{T}$ we obtained as $T\rightarrow\infty$.
\end{remark}

\section{Berry-Ess\'{e}en bound for complex Wiener-It\^o integral vector}\label{section 5}

	%------------------------------------------------------
	
The following theorem demonstrates that the Berry-Ess\'{e}en upper bound for the complex multiple Wiener-It\^o integral vector is related to a partial order relation of the indices of its components. This fact has no real counterpart and indicates the complexities and challenges inherent in exploring the Berry-Ess\'{e}en bound for complex multiple Wiener-It\^o integral vector.

	%------------------------------------------------------
	
	%------------------------------------------------------
	\begin{thm}\label{duowei b-e bound thm}
	 Consider $F=(F_1,\dots,F_d)'$, where $F_j = I_{p_j,q_j} ( f_j)$, $f_j \in \FH^{\odot p_j}\otimes \FH^{\odot q_j} $ and $l_j:=p_j+q_j\geq 2$ for $j = 1, \dots, d$. Assume that $\E[FF']=0$, $\E[F\overline{F'}]=\Sigma$ and $Z\sim \mathcal{CN}(0,\Sigma)$. If $\Sigma$ is positive definite, then 
		\begin{align}\label{Fourth moment BEB2 coro 2}
			d_{W}(F, Z)\le { \frac{2\sqrt{d\lambda_{max}}}{\lambda_{min}}}\sqrt{\E\norm{F}^4-\E\norm{Z}^4},
		\end{align}
		where $\lambda_{max}$ and $\lambda_{min}$ are respectively the maximum and minimum eigenvalue of $\Sigma$ and $\norm{z}=\sqrt{\sum_{i=1}^{d}|z_i|^2}$ for $z=(z_1,\ldots,z_d)\in \C^d$. Moreover, there exists a constant $c$ depending on $(p_j,q_j)$, $1\leq j\leq d$, such that 
		\begin{equation}\label{key estimate 000}
			\begin{aligned}
				\E\norm{F}^4-\E\norm{Z}^4  
				&\le c \sum_{r=1}^{d}\Big\{ \sum_{0<i+i'<l_r}\norm{f_r {\otimes}_{i,i'}h_r}^2_{\FH^{\otimes(2(l_r-i-i'))}}  \\
				& \quad +   \sum_{j\neq r,\,1\le j\le d} \Big[\mathbb{1}_{\set{ (p_j,q_j)\curlyeqsucc (q_r,p_r)}} \norm{f_r}^2_{\FH^{\otimes l_r}}\cdot\norm{f_j\otimes_{p_j-q_r,q_j-p_r}h_j}_{\FH^{\otimes{2l_r} }} \\
				&\quad +\mathbb{1}_{\set{(p_j,q_j)\curlyeqsucc (p_r,q_r)}}\norm{f_r}^2_{\FH^{\otimes l_r}}\cdot\norm{f_j\otimes_{p_j-p_r,q_j-q_r}h_j}_{\FH^{\otimes{2l_r} }} \\
				&\quad +\mathbb{1}_{\set{(p_r,\,q_r)\curlyeqsucc (q_j,\,p_j)}}\norm{f_j }^2_{\FH^{\otimes l_j}}\cdot\norm{f_r\otimes_{p_r-q_j,\,q_r-p_j}h_r}_{\FH^{\otimes{2l_j} }}  \\
				&\quad +\mathbb{1}_{\set{(p_r,\,q_r)\curlyeqsucc (p_j,\,q_j)}}\norm{f_j }^2_{\FH^{\otimes l_j}}\cdot\norm{f_r\otimes_{p_r-p_j,\,q_r-q_j}h_r}_{\FH^{\otimes{2l_j} }}  \Big]\Big\},
			\end{aligned}
		\end{equation}
		where $h_r$ is the reverse complex conjugate of $f_r$ for $1\leq r\leq d$.
	\end{thm}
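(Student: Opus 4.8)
The plan is to deduce both assertions from the real-variable machinery already assembled in the excerpt. First write $F_j=A_j+\mi B_j$; by \cite[Theorem 3.3]{cl17} every $A_j,B_j$ is a real $l_j$-th Wiener-It\^o integral, so $\tilde F=(A_1,\dots,A_d,B_1,\dots,B_d)$ is a vector of $2d$ real multiple Wiener-It\^o integrals, and $\tilde Z$ is the associated $\mathcal{N}_{2d}(0,\Sigma')$ vector. The hypotheses $\E[FF']=0$ and $\E[F\overline{F'}]=\Sigma$ say precisely that $\tilde F$ carries the same covariance matrix $\Sigma'$ as $\tilde Z$, so any covariance-matching contribution in the multivariate estimate is zero. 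Applying \cite[Theorem 6.2.2]{NourPecc12} to $(\tilde F,\tilde Z)$ then yields a bound of the form $c(d,\Sigma')\sqrt{\sum_{i=1}^{2d}\kappa_i}$, where $\kappa_i:=\E[\tilde F_i^4]-3(\E[\tilde F_i^2])^2$ is the fourth cumulant of the $i$-th real component. Since the real symmetric matrix $\Sigma'$ carries each eigenvalue of the Hermitian matrix $\Sigma$ with its value halved (and multiplicity doubled), one has $\lambda_{\min}(\Sigma')=\tfrac12\lambda_{\min}$ and $\lambda_{\max}(\Sigma')=\tfrac12\lambda_{\max}$, which is exactly what turns the abstract constant into $\tfrac{2\sqrt{d\lambda_{\max}}}{\lambda_{\min}}$.

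The crux of \eqref{Fourth moment BEB2 coro 2} is to replace the sum of componentwise cumulants by the single quantity $\E\norm{F}^4-\E\norm{Z}^4$. For this I would record the identity $\E\norm{F}^4-\E\norm{Z}^4=\mathrm{Var}(\norm{F}^2)-\mathrm{Var}(\norm{Z}^2)$, valid because $\E\norm{F}^2=\E\norm{Z}^2=\mathrm{tr}\,\Sigma$, and expand it as $\sum_{j,k}\big(\mathrm{Cov}(\abs{F_j}^2,\abs{F_k}^2)-\abs{\E[F_j\overline{F_k}]}^2-\abs{\E[F_jF_k]}^2\big)$, using $\E[F_jF_k]=\E[Z_jZ_k]=0$ and $\mathrm{Cov}(\abs{Z_j}^2,\abs{Z_k}^2)=\abs{\Sigma(j,k)}^2$ for the proper complex Gaussian $Z$. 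The diagonal $j=k$ part is exactly $\sum_j\big(\E\abs{F_j}^4-2(\E\abs{F_j}^2)^2-\abs{\E[F_j^2]}^2\big)$, which by \eqref{bushengshi 1} dominates $\sum_j(\kappa(A_j)+\kappa(B_j))=\sum_{i=1}^{2d}\kappa_i$; and the off-diagonal terms are non-negative because, by the identity \eqref{express 4moment}, each summand $\mathrm{Cov}(\abs{F_j}^2,\abs{F_k}^2)-\abs{\E[F_j\overline{F_k}]}^2-\abs{\E[F_jF_k]}^2$ equals $I_1+I_2+I_3+I_4$, a sum of squared-norm terms with non-negative coefficients. Hence $\sum_{i=1}^{2d}\kappa_i\le \E\norm{F}^4-\E\norm{Z}^4$, and combining with the previous paragraph gives \eqref{Fourth moment BEB2 coro 2}.

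For the kernel estimate \eqref{key estimate 000} I would reuse the same pairwise decomposition $\E\norm{F}^4-\E\norm{Z}^4=\sum_{j,k}\big(\mathrm{Cov}(\abs{F_j}^2,\abs{F_k}^2)-\abs{\E[F_j\overline{F_k}]}^2-\abs{\E[F_jF_k]}^2\big)$ and bound each summand by Proposition \ref{jproo key estimate} (and by Proposition \ref{be bound prop 63} on the diagonal $j=k$). For a fixed pair this yields $c\big(\mathcal{A}(f_j,h_j,f_k,h_k)+\mathcal{B}(f_j,h_j,f_k,h_k)\big)$; summing over all pairs, the $\mathcal{A}$-contributions collapse into the single sum $\sum_{r=1}^d\sum_{0<i+i'<l_r}\norm{f_r\otimes_{i,i'}h_r}^2_{\FH^{\otimes(2(l_r-i-i'))}}$, while the four indicator-weighted $\mathcal{B}$-contributions reproduce verbatim the $\curlyeqsucc$-terms displayed in \eqref{key estimate 000}. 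The bookkeeping of which of the four partial-order cases survives for each ordered pair $(r,j)$ is precisely the content already packaged in the definition of $\mathcal{B}$, so no new estimate is required here.

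The main obstacle is the passage from the componentwise real fourth cumulants delivered by \cite[Theorem 6.2.2]{NourPecc12} to the global quantity $\E\norm{F}^4-\E\norm{Z}^4$: a priori the multivariate bound controls only the diagonal, and one must argue that the cross terms cannot improve the Gaussian approximation, i.e.\ that they are non-negative. The manifestly non-negative representation \eqref{express 4moment} is exactly what makes this work, and it is the step where the complex structure — the simultaneous appearance of $\abs{\E[F_j\overline{F_k}]}^2$ and $\abs{\E[F_jF_k]}^2$ — must be handled with care. A secondary nuisance is the constant bookkeeping, namely tracking the factor $\tfrac12$ in the eigenvalues of $\Sigma'$ so that the final constant is genuinely $\tfrac{2\sqrt{d\lambda_{\max}}}{\lambda_{\min}}$ rather than a loose multiple of it.
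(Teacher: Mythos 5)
Your route to \eqref{Fourth moment BEB2 coro 2} through \cite[Theorem 6.2.2]{NourPecc12} cannot deliver the constant that the theorem asserts, and this is a genuine gap rather than bookkeeping. The constant $\frac{2\sqrt{d\lambda_{max}}}{\lambda_{min}}$ in \eqref{Fourth moment BEB2 coro 2} is independent of the chaos orders $(p_j,q_j)$, whereas any fourth-cumulant bound extracted from Theorem 6.2.2 necessarily carries order-dependent combinatorial factors. You can see this in the paper's own univariate proof of Theorem \ref{cFMBE bound}, where the very same citation produces the factor $4\sqrt{\sum_{r=1}^{l-1}\binom{2r}{r}}\,\frac{\sqrt{\lambda_1}}{\lambda_2}$, which grows without bound in $l=p+q$. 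Writing the Theorem 6.2.2 bound as ``$c(d,\Sigma')\sqrt{\sum_i\kappa_i}$'' hides exactly this order dependence inside $c$; your eigenvalue computation for $\Sigma'$ is correct ($\lambda_{\min}(\Sigma')=\tfrac12\lambda_{\min}$, $\lambda_{\max}(\Sigma')=\tfrac12\lambda_{\max}$) but irrelevant to the point. As it stands, your first two paragraphs prove \eqref{Fourth moment BEB2 coro 2} only with a constant $c(p_1,q_1,\dots,p_d,q_d)\,\sqrt{d\lambda_{max}}/\lambda_{min}$, a strictly weaker statement. The paper takes a different external input: it combines the decomposition \cite[Theorem 3.3]{cl17} with \cite[Theorem 4.3]{NR12}, a multivariate bound for vectors of real multiple Wiener--It\^o integrals that is stated directly in terms of $\sqrt{\E\norm{F}^4-\E\norm{Z}^4}$ with a constant depending only on the dimension and the extreme eigenvalues of the covariance. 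With that input, no passage through componentwise cumulants is needed at all, and your second paragraph (the inequality $\sum_{i=1}^{2d}\kappa_i\le\E\norm{F}^4-\E\norm{Z}^4$) becomes superfluous. To repair your argument you would have to cite or reprove such an order-free result; nothing in the excerpt lets you strip the combinatorial factor out of Theorem 6.2.2.

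On the positive side, the pieces you assemble are individually sound, and your derivation of \eqref{key estimate 000} coincides with the paper's: both expand
\begin{equation*}
	\E\norm{F}^4-\E\norm{Z}^4=\sum_{j,r=1}^d\Big\{\mathrm{Cov}\big(\abs{F_j}^2,\abs{F_r}^2\big)-\big|\E\big[F_j\overline{F_r}\big]\big|^2\Big\},
\end{equation*}
using $\E[F_jF_r]=0$ and the Gaussian identity for $\E\norm{Z}^4$, and then apply Proposition \ref{jproo key estimate} to each ordered pair (the diagonal pairs contributing $\mathcal{A}(f_r,h_r)$ and no $\mathcal{B}$-terms, since $\curlyeqsucc$ is strict). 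Your auxiliary inequality $\sum_i\kappa_i\le\E\norm{F}^4-\E\norm{Z}^4$ is also correct as stated --- the diagonal domination follows from \eqref{bushengshi 1} and the off-diagonal non-negativity from the representation \eqref{express 4moment} as a sum of squared norms with non-negative coefficients --- it simply cannot compensate for the constant lost in the first step.
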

	\begin{remark}\label{remark3.12}
The estimate \eqref{key estimate 000} has a very special feature. For example, if $d=2$ and $(p_1,q_1)=(5,1),(p_2,q_2)=(3,2)$, then the second term $\sum_{j\neq r}$ in \eqref{key estimate 000} vanishes. On the other hand, if we use the identity \eqref{pp2 Connection} together with \cite[Theorem 6.2.2]{NourPecc12}, some extra terms similar to that in $\sum_{j\neq r}$ appear. Consequently, the estimate \eqref{key estimate 000} may improve the corresponding estimate in \cite[Theorem 6.2.2]{NourPecc12}.
	\end{remark}
	\begin{remark}
		The bound \eqref{Fourth moment BEB2 coro 2} obviously improves the bound given in \cite[Theorem 4.6]{Camp15} as
		\begin{equation*}
			d_{W}(F, Z)\le \frac{\sqrt{2\lambda_{max}}}{\lambda_{min}} \sqrt{ \E\norm{F}^4-\E\norm{Z}^4+ \sum_{j,r=1}^{d}\sqrt{\frac12\E[\abs{F_j}^4]\Big(\E[\abs{F_r}^4]-2 \big(\E[\abs{F_r}^2]\big)^2 } \Big)}.
		\end{equation*} 
		See Remark \ref{compare to Campese 1} and  Remark \ref{compare to Campese 2} for specific explanations of the case $d=1$.
	\end{remark}
	
		\noindent{\it Proof of Theorem \ref{duowei b-e bound thm}.\,}
		We denote by $d \times d$ matrices $\Sigma_{\RE}=(\Sigma_{\RE}(i,j))_{1\leq i,j\leq d}$ and $\Sigma_{\IM}=(\Sigma_{\IM}(i,j))_{1\leq i,j\leq d}$ the real and imaginary parts of $\Sigma=(\Sigma(i,j))_{1\leq i,j\leq d} $ respectively, namely, $\Sigma=\Sigma_{\RE}+\im \Sigma_{\IM}$. Denote $F^{j}=A^{j}+\mi B^{j}$ for $1\leq j\leq d$. Since $\E[F F'] =0$ and $\E[F \overline{F'}] = \Sigma$, the covariance matrix of $	\left(A^{1},\ldots,A^{d}, B^{1},\ldots,B^{d}\right)$ is $\Sigma'=\begin{pmatrix}
			\frac{1}{2}\Sigma_{\RE} & -\frac{1}{2}\Sigma_{\IM} \\
			\frac{1}{2}\Sigma_{\IM} & \frac{1}{2}\Sigma_{\RE}
		\end{pmatrix}$.
	By \cite[Lemma 3.7.12]{GRG13}, the positive definiteness of $\Sigma$ implies that $\Sigma'$ is positive definite. Combining \cite[Theorem 3.3]{cl17} with \cite[Theorem 4.3]{NR12}, we have that
	\begin{align*}
		d_{W}(F, Z)\le { \frac{2\sqrt{d \lambda_{max}}}{\lambda_{min}}}\sqrt{\E\norm{F}^4-\E\norm{Z}^4}.
	\end{align*}
	Since $Z\sim \mathcal{CN}(0,\Sigma)$, we know that 
	$\RE Z_i$ and $\IM Z_i$ are independent and identically distributed as $\mathcal{N}(0,\frac{1}{2}\Sigma(i,i))$ for $1\leq i\leq d$. Moreover,
	\begin{equation*}
		\E[\RE Z_i\RE Z_j] =\E[\IM Z_i\IM Z_j]=\frac{1}{2}\Sigma_{\RE}(i,j),\quad \E[\RE Z_i\IM Z_j]=\frac12\Sigma_{\IM}(i,j),\quad 1\leq i,j\leq d.
	\end{equation*}
	Calculating directly, we have that 
	\begin{equation*}
		\E\norm{Z}^4=\left( \sum_{j=1}^{d}\Sigma(j,j)\right) ^2+\sum_{j,r=1}^{d}\left| \Sigma(j,r)\right| ^2.
	\end{equation*}
	 Hence, 
	\begin{align*}
		\E\norm{F}^4-\E\norm{Z}^4=\sum_{j,r=1}^d \Big\{ \mathrm{Cov}(\abs{F^j}^2,\,\abs{F^r}^2) - \left|\E\left[ F^j\overline{F^r}\right]  \right| ^2\Big\}.
	\end{align*}
	Then the upper bound \eqref{key estimate 000} is an immediate consequence of Proposition \ref{jproo key estimate} and the fact that $\E[F^jF^r]=0$ for $1\leq j,r\leq d$.
	%--------------------------------------------------------------------------------------------------------
	{\hfill\large{$\Box$}}\\

	%--------------------------------------------------------------------------------------------------------
	
	%-------------------------------------------------------------------------------------------------------
	
	%--------------------------------------------------------------------------------------------------------

	%--------------------------------------------------------------------------------------------------------------%
	%-------------------------------------------------------------------------------------------------------

As a consequence of Theorem \ref{duowei b-e bound thm}, we derive the following multivariate complex Fourth Moment Theorem. Compared to \cite[Proposition 4.9]{Camp15} or \cite[Proposition 3.6]{Campese16}, which also imply a similar result, the assumption of the following theorem is more simpler and the proof is completely different from those presented in \cite{Camp15,Campese16}. 
	\begin{theorem}\label{thm Camp}
	Let $\left\lbrace F_n=(F_n^1,F_n^2,\dots, F_n^d)'\right\rbrace _{n\geq1}$ be a sequence of random vectors, where $F_n^j = I_{p_j,q_j} ( f_n^j)$, $f_n^j \in \FH^{\odot p_j}\otimes \FH^{\odot q_j} $  and $l_j:=p_j+q_j\geq 2$ for $j = 1, \dots, d$ and $n\geq1$. Assume that  $\E[F_n F_n^{'}] =0$ and $\E[F_n \overline{F_n^{'}}]\rightarrow\left(\Sigma(i,j) \right) _{1\leq i,j\leq d}$ as $n\to \infty$. Then, the following two conditions are equivalent as $n\to \infty$.
		\begin{itemize}
			\item[\textup{(i)}]
			$F_n$ converges in distribution to $Z\sim \mathcal{CN}_d(0,\Sigma)$.
			\item[\textup{(ii)}] For every $1 \le j  \le d$, $F^j_n$ converges in distribution to $Z_j\sim\mathcal{CN}(0,\Sigma(j,j))$.
		\end{itemize}
	\end{theorem}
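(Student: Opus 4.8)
The plan is to prove the two implications separately; $(i)\Rightarrow(ii)$ is immediate, so the real content is $(ii)\Rightarrow(i)$. For $(i)\Rightarrow(ii)$, each coordinate projection $\C^d\to\C$ is continuous, so $F_n\overset{d}{\to}Z$ yields $F_n^j\overset{d}{\to}Z_j$, since the $j$-th marginal of $Z\sim\mathcal{CN}_d(0,\Sigma)$ is exactly $\mathcal{CN}(0,\Sigma(j,j))$. For the converse, the engine is the quantitative estimate of Theorem \ref{duowei b-e bound thm}. Because $\E[F_nF_n']=0$, it suffices to show that the fourth-moment defect $\E\norm{F_n}^4-\E\norm{Z}^4$ tends to $0$; the bound \eqref{Fourth moment BEB2 coro 2}, applied with the exact covariance $\Sigma_n:=\E[F_n\overline{F_n'}]$ and the triangle inequality $d_{W}(F_n,Z)\le d_{W}(F_n,Z_n)+d_{W}(Z_n,Z)$ with $Z_n\sim\mathcal{CN}_d(0,\Sigma_n)$, then upgrades this to $d_{W}(F_n,Z)\to0$ and hence to $(i)$.

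First I would use hypothesis $(ii)$ to control the \emph{self}-contractions of each kernel. Since $\E[(F_n^j)^2]=0$ (the $(j,j)$-entry of $\E[F_nF_n']=0$), the one-dimensional complex Fourth Moment Theorem (Corollary \ref{equilent cond}) shows that $F_n^j\overset{d}{\to}Z_j$ is equivalent to $\norm{f_n^j\otimes_{i,i'}h_n^j}_{\FH^{\otimes(2(l_j-i-i'))}}\to0$ for every $0<i+i'<l_j$ and every $1\le j\le d$; moreover $\norm{f_n^j}_{\FH^{\otimes l_j}}$ stays bounded because $\E[\abs{F_n^j}^2]\to\Sigma(j,j)$ by the isometry property \eqref{isometry property}. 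This immediately annihilates the diagonal sum $\sum_{0<i+i'<l_r}$ appearing in \eqref{key estimate 000}.

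The heart of the argument is the observation that the apparent cross terms in \eqref{key estimate 000} are self-contractions in disguise. Take, for $j\neq r$, the term gated by $\mathbb{1}_{\set{(p_j,q_j)\curlyeqsucc(q_r,p_r)}}$, namely $\norm{f_r}^2_{\FH^{\otimes l_r}}\cdot\norm{f_j\otimes_{p_j-q_r,\,q_j-p_r}h_j}_{\FH^{\otimes 2l_r}}$. Its contraction indices sum to $(p_j-q_r)+(q_j-p_r)=l_j-l_r$, and the strict order $\curlyeqsucc$ forces $l_j>l_r$ (equality would force $(p_j,q_j)=(q_r,p_r)$, which $\curlyeqsucc$ excludes), so $0<l_j-l_r<l_j$. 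Hence this factor is precisely one of the middle-range self-contractions $\norm{f_j\otimes_{i,i'}h_j}_{\FH^{\otimes(2(l_j-i-i'))}}$ with $0<i+i'<l_j$ already shown to vanish, and, multiplied by the bounded factor $\norm{f_r}^2_{\FH^{\otimes l_r}}$, the whole term tends to $0$. The identical degree count disposes of the other three cross terms: each reduces to a self-contraction of $f_j$ (of order $l_j-l_r$) or of $f_r$ (of order $l_r-l_j$), always strictly inside the admissible range. Summing over $1\le j,r\le d$, the right-hand side of \eqref{key estimate 000} tends to $0$, whence $\E\norm{F_n}^4-\E\norm{Z}^4\to0$.

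I expect the main obstacle to be twofold. The delicate point is the partial-order bookkeeping: for each of the four indicator-gated cross terms one must verify both that the contraction is admissible (e.g. $0\le p_j-q_r\le p_j$ and $0\le q_j-p_r\le q_j$) and that its total order lies \emph{strictly} between $0$ and the relevant $l$, and it is exactly the strictness built into $\curlyeqsucc$ that rules out the degenerate orders $0$ and $l_j$, which would otherwise produce uncontrolled factors such as $\norm{f_j}^2$. Secondly, Theorem \ref{duowei b-e bound thm} requires $\Sigma$ positive definite, so a possibly degenerate limit covariance must be handled separately; the cleanest route is to invoke the real multivariate Fourth Moment Theorem (see \cite[Theorem 6.2.3]{NourPecc12}) for the $2d$-dimensional vector $\widetilde{F_n}=(A_n^1,\dots,A_n^d,B_n^1,\dots,B_n^d)$ obtained from \eqref{pp2 Connection}: its covariance converges to $\Sigma'$ and, by $(ii)$, each of its components converges to a Gaussian, so the vanishing fourth-moment defect established above yields joint convergence with no nondegeneracy assumption.
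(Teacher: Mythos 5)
Your proposal is correct, and its main engine is exactly the paper's: (i)$\Rightarrow$(ii) is immediate, and for (ii)$\Rightarrow$(i) you feed the vanishing self-contractions supplied by Corollary \ref{equilent cond} into the quantitative bounds \eqref{Fourth moment BEB2 coro 2} and \eqref{key estimate 000} of Theorem \ref{duowei b-e bound thm}, then pass from $d_{W}(F_n,Z)\to 0$ to weak convergence. Where you differ is that you make rigorous two points the paper's one-line citation glosses over, and both additions are genuine. First, you verify that every cross term in \eqref{key estimate 000} is an admissible self-contraction $f_j\otimes_{i,i'}h_j$ (or $f_r\otimes_{i,i'}h_r$) of total order $l_j-l_r$ (resp.\ $l_r-l_j$) lying \emph{strictly} between $0$ and $l_j$ (resp.\ $l_r$) because $\curlyeqsucc$ is strict, multiplied by a factor $\norm{f_r}^2_{\FH^{\otimes l_r}}$ that stays bounded by the isometry property; the paper asserts this implicitly without the degree count. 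Second, Theorem \ref{duowei b-e bound thm} is stated for a vector whose covariance equals a \emph{positive definite} $\Sigma$ exactly, whereas here $\E[F_n\overline{F_n'}]=\Sigma_n\to\Sigma$ only; your triangle inequality through $Z_n\sim\mathcal{CN}_d(0,\Sigma_n)$ (positive definite for large $n$ when $\Sigma$ is) is the repair the paper's proof tacitly needs, and for degenerate $\Sigma$ — a case the theorem's statement allows but the paper's proof cannot reach, since $\lambda_{min}=0$ — your fallback via \eqref{pp2 Connection} and the real Peccati--Tudor theorem \cite[Theorem 6.2.3]{NourPecc12} is precisely the alternative route the paper mentions just before its proof but declines to carry out. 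In short: same approach, but your write-up closes gaps the paper leaves open.
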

	
Using \cite[Section 3.7.6]{GRG13}, the decomposition theorem of complex multiple Wiener-It\^o integral (\cite[Theorem 3.3]{cl17}) and the multivariate normal approximation for real multiple Wiener-It\^o integral (\cite[Theorem 6.2.3]{NourPecc12}), we can prove Theorem \ref{thm Camp}. However, by Corollary \ref{equilent cond} and Theorem \ref{duowei b-e bound thm}, we provide a simpler and more straightforward proof within the framework of complex multiple Wiener-It\^o integral.

\noindent{\it Proof of Theorem \ref{thm Camp}.\,}
	The implication (i)$\Rightarrow$(ii) is trivial, whereas the implication (ii)$\Rightarrow$(i)
		follows directly from Corollary \ref{equilent cond} and Theorem \ref{duowei b-e bound thm}. Specifically, by Corollary \ref{equilent cond},  (ii) implies that for $1\leq j\leq d$, as $n\rightarrow \infty$ ,
		$$\norm{f_n^j{\otimes}_{i,i'} h_n^j}_{\FH^{\otimes ( 2(l_j-i-i'))}}\to 0, \mbox{ for any }0<i+i'<l_j .$$
		Then by \eqref{Fourth moment BEB2 coro 2} and \eqref{key estimate 000}, 
		$$	d_{W}(F_n, Z)\rightarrow 0, \quad n\rightarrow \infty.$$
		Since the topology induced by $d_{W}$ is stronger than the topology induced by weak convergence (see \cite[Proposition C.3.1]{NourPecc12}), we get the conclusion.	
	%--------------------------------------------------------------------------------------------------------
	{\hfill\large{$\Box$}}\\

	The following complex chaotic central limit theorem (see \cite{Hu17} and \cite[Theorem 3]{ HuNua05} for the real version) is a corollary of Theorem \ref{thm Camp}.
	\begin{theorem}\label{thm Pecca TUd}
		Let $\set {F_n}_{n\ge 1}$ be a sequence in $L^2(\Omega)$ such that 
		$F_n$ has the following Wiener-It\^{o} chaos decomposition 
		$$F_n=\sum_{p+q\ge 1}I_{p,q}( f_{n,p,q}).$$ 
		We denote by $G_{n,M}$ the random vector composed by $\left\lbrace I_{p,q}( f_{n,p,q}): 1\leq p+q\leq M \right\rbrace $, where $M\geq 1$. Suppose that  $\E[G_{n,M} G_{n,M}^{'}] =0$ for any $M\geq 1$. Assume that
		\begin{itemize}
			\item[\textup{(i)}] For each $p+q\ge 1$,  $\lim\limits_{n\to \infty} p!q!\norm{f_{n,p,q}}^2_{\FH^{\otimes (p+q)}}= \sigma^2_{p,q}$;
			\item[\textup{(ii)}] $\sigma^2=\sum\limits_{p+q\ge 1} \sigma^2_{p,q}<\infty $;
			\item[\textup{(iii)}] For each $p+q\ge 2$, $\lim\limits_{n\to \infty} \norm{f_{n,p,q} \otimes_{i,j} {h_{n,q,p}}}^2_{\FH^{\otimes (2(p+q-i-j)}}=0$ for any $0\le i\le p$, $0\le j\le q$ and $0<i+j< p+q$, where 
			$  {h_{n,q,p}} $ is the reverse complex conjugate of $f_{n,p,q}$;
			\item[\textup{(iv)}] $\lim\limits_{M\to \infty} \sup\limits_{n\ge 1}\sum\limits_{p+q> M}  p!q!\norm{f_{n,p,q}}^2_{\FH^{\otimes (p+q)}}=0$.
		\end{itemize}
		Then  $ F_n \stackrel{ {d}}{\to} \mathcal{CN}(0,\sigma^2)$ as $n\to \infty$.
	\end{theorem}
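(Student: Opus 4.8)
The plan is a truncation argument in the chaos order. For $M\ge 1$, write $F_{n,M}=\sum_{1\le p+q\le M}I_{p,q}(f_{n,p,q})$ for the sum of the components of the vector $G_{n,M}$, and set $\sigma_M^2=\sum_{1\le p+q\le M}\sigma_{p,q}^2$. I would reduce the theorem to three statements: (a) for each fixed $M$, $F_{n,M}\overset{d}{\to}\mathcal{CN}(0,\sigma_M^2)$ as $n\to\infty$; (b) $\sigma_M^2\to\sigma^2$ as $M\to\infty$, which is immediate from (ii); and (c) $\sup_{n\ge1}\E[|F_n-F_{n,M}|^2]\to0$ as $M\to\infty$. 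By the isometry \eqref{isometry property} and the orthogonality of distinct chaoses, $\E[|F_n-F_{n,M}|^2]=\sum_{p+q>M}p!q!\,\norm{f_{n,p,q}}_{\FH^{\otimes(p+q)}}^2$, so (c) is exactly assumption (iv).

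The heart of the matter is (a), which I would obtain from the multivariate complex Fourth Moment Theorem (Theorem \ref{thm Camp}) applied to $G_{n,M}$. By hypothesis $\E[G_{n,M}G_{n,M}']=0$; in particular $\E[(I_{p,q}(f_{n,p,q}))^2]=0$ for each index, which forces the limiting laws to be circular. By \eqref{isometry property} and the orthogonality of distinct chaoses, the Hermitian covariance $\E[G_{n,M}\overline{G_{n,M}'}]$ is diagonal, its $(p,q)$-entry being $p!q!\,\norm{f_{n,p,q}}^2\to\sigma_{p,q}^2$ by (i). For each component with $p+q\ge2$, assumption (iii) is precisely condition (ii) of Corollary \ref{equilent cond}; combined with (i) and the circularity just noted, every such component converges to $\mathcal{CN}(0,\sigma_{p,q}^2)$, while the first-order components (if present) are already complex Gaussian with converging variance. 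Hence $G_{n,M}$ converges componentwise, and Theorem \ref{thm Camp} upgrades this to joint convergence $G_{n,M}\overset{d}{\to}Z_M\sim\mathcal{CN}_{d_M}(0,\Sigma_M)$ with $\Sigma_M$ real and diagonal. Since $\Sigma_M$ is diagonal and $\E[Z_MZ_M']=0$, the coordinates of $Z_M$ are independent complex Gaussians, so applying the continuous map $z\mapsto\sum_i z_i$ gives $F_{n,M}\overset{d}{\to}\mathcal{CN}(0,\sigma_M^2)$.

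I would then assemble (a)--(c) through the triangle inequality for the Wasserstein distance, bounding $d_{W}(F_n,\mathcal{CN}(0,\sigma^2))$ by $d_{W}(F_n,F_{n,M})+d_{W}(F_{n,M},\mathcal{CN}(0,\sigma_M^2))+d_{W}(\mathcal{CN}(0,\sigma_M^2),\mathcal{CN}(0,\sigma^2))$. The first term is at most $(\E[|F_n-F_{n,M}|^2])^{1/2}$ and is controlled uniformly in $n$ by (c); the second tends to $0$ as $n\to\infty$ by (a), using that the second moments of $F_{n,M}$ stay bounded so that convergence in distribution is also convergence in $d_{W}$; and the third tends to $0$ as $M\to\infty$ by (b). Letting $n\to\infty$ and then $M\to\infty$ yields $d_{W}(F_n,\mathcal{CN}(0,\sigma^2))\to0$, hence the asserted convergence in distribution.

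The main obstacle I anticipate is twofold. First, Theorem \ref{thm Camp} is stated under $l_j\ge2$, so the possible presence of first-order chaos terms must be accommodated: one checks that such terms are genuinely complex Gaussian and satisfy the requisite contraction conditions vacuously, so the componentwise-to-joint mechanism — ultimately the decomposition \eqref{pp2 Connection} into real integrals combined with the real multivariate Fourth Moment Theorem — still applies to the enlarged vector. Second, the interchange of the limits $n\to\infty$ and $M\to\infty$ must be justified carefully; this is exactly what the uniform tail estimate (iv) is designed to supply, while the bounded-second-moment property of the truncations is what guarantees that the weak limit produced in (a) is also a limit in $d_{W}$.
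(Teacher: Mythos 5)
Your proposal is correct and follows essentially the same route as the paper: truncate in chaos order, obtain componentwise convergence of $G_{n,M}$ from Corollary \ref{equilent cond} together with the circularity forced by $\E[G_{n,M}G_{n,M}']=0$, upgrade to joint convergence by Theorem \ref{thm Camp}, sum the coordinates, and pass to the limit first in $n$ and then in $M$ via a triangle inequality, with assumption (iv) giving the uniform-in-$n$ tail control. The only notable difference is the metric used in the final assembly: the paper works with the Fortet--Mourier (bounded Wasserstein) distance $d_{\mathrm{FM}}$, which metrizes weak convergence with no moment hypotheses, whereas you use $d_{W}$ and therefore need (and correctly supply) the extra uniform-integrability step coming from the uniformly bounded second moments of the truncations $F_{n,M}$.
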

	\noindent{\it Proof of Theorem~\ref{thm Pecca TUd} .\,} By the condition that $\E[G_{n,M} G_{n,M}^{'}] =0$ for any $M\geq 1$, we know that $\mathbb{E}[ I_{p,q}\left( f_{n,p,q}\right)^2] =0$ for each $p+q\geq1$. Combining this fact with conditions (i) and (iii), and using Corollary \ref{equilent cond}, we have that for fixed $p,q$, $$I_{p,q}\left( f_{n,p,q}\right)\overset{d}{\rightarrow}N_{p,q}\sim\mathcal{CN}(0,\sigma^2_{p,q}), \quad n\rightarrow \infty . $$
	By Theorem \ref{thm Camp}, we get that for fixed $M\geq1$, $G_{n,M}$ converges in distribution to $N_{M}$, which is a random vector composed by independent complex Gaussian variables $\left\lbrace N_{p,q}: 1\leq p+q\leq M \right\rbrace $. For fixed $M\ge 1$, we set 
	$$	F_n^{(M)}=\sum_{p+q\ge 1}^{M}I_{p,q}( f_{n,p,q}),\quad N^{(M)}=\sum_{p+q\ge 1}^{M} N_{p,q},\quad N=\sum_{p+q\ge 1}N_{p,q}.$$ Then $F_n^{(M)}\overset{d}{\rightarrow}N^{(M)}$. Let $g:\R^2\rightarrow\R$ be such that $\norm{g}_{\mathrm{Lip}}+\norm{g}_{\infty}\leq 1$, where $$\norm{g}_{\mathrm{Lip}}=\sup_{x\neq y, x,y\in\R^2}\frac{|g(x)-g(y)|}{\norm{x-y}_{\R^2}},\quad \norm{g}_{\mathrm{\infty}}=\sup_{x\in\R^2}|g(x)|.$$ Then the Fortet-Mourier distance (also called bounded Wasserstein distance, see \cite[Definition C.2.1]{NourPecc12})  between $F_n$ and $N$ satisfies that
	\begin{align*}
		& \quad d_{\mathrm{FM}}(F_n,\,N)  =\sup_{g} \abs{\E\left[ g(\RE F_n,\IM F_n)\right] - \E\left[ g(\RE N,\IM N)\right] }  \\
		&\le \sup_{g} \Big[ \left| \E\left[ g(\RE F_n,\IM F_n)\right] - \E\left[ g\left( \RE F_n^{(M)},\IM F_n^{(M)}\right) \right]  \right|  +\abs{\E\left[ g\left( \RE F_n^{(M)},\IM F_n^{(M)}\right) \right]  \right. \\&\left. \quad\qquad-\E\left[ g\left( \RE N^{(M)},\IM N^{(M)}\right) \right]  } +\abs{\E\left[ g\left( \RE N,\IM N\right) \right]  -\E\left[ g\left( \RE N^{(M)}, \IM N^{(M)}\right) \right]  }\Big]  \\
		&\le \sqrt{\E\left[ \left| F_n-F_n^{(M)}\right| ^2\right] }+\sqrt{\E\left[ \left| N-N^{(M)}\right| ^2\right] }\\&\quad
		+\sup_{g}\left|\E\left[ g\left( \RE F_n^{(M)},\IM F_n^{(M)}\right) \right] -\E\left[ g\left( \RE N^{(M)},\IM N^{(M)}\right) \right] \right|
		\\
		&\le  \Big(  \sum\limits_{p+q> M}  p!q!\norm{f_{n,p,q}}^2_{\FH^{\otimes (p+q)}} \Big)^{\frac12}+\Big( \sum\limits_{p+q> M} \sigma^2_{p,q}\Big) ^{\frac12}\\&\quad + \sup_{g}\left|\E\left[ g\left( \RE F_n^{(M)},\IM F_n^{(M)}\right) \right] -\E\left[ g\left( \RE N^{(M)},\IM N^{(M)}\right) \right] \right|,
	\end{align*}
	where we use the mean value theorem and Cauchy-Schwarz inequality in the second inequality. Since $F_n^{(M)}\overset{d}{\rightarrow}N^{(M)}$ as $n\rightarrow\infty$ for a fixed $M\geq1$, by \cite[Proposition C.3.4]{NourPecc12}, the third term on the right hand side of the above inequality converges to zero as $n\rightarrow \infty$. Combining conditions (ii) with (iv), first taking the limit as $n\to \infty$ and then as $M\to \infty$, we conclude that $d_{\mathrm{FM}}(F_n,\,N) \to 0 $ as $n\to\infty$. By \cite[Proposition C.3.4]{NourPecc12}) again, we get that $ F_n \stackrel{ {d}}{\to} N\sim\mathcal{CN}(0,\sigma^2)$.
	%--------------------------------------------------------------------------------------------------------
	{\hfill\large{$\Box$}}\\
	%--------------------------------------------------------------------------------------------------------
	
	%-------------------------------------------------------------------------------------------------------
	
	\bibliographystyle{abbrv}	
	\bibliography{refs}

	%-------------------------------------------------------------------------------------------------------
\end{document}